\newtheorem{theorem}{Theorem}[section]
\newtheorem{proposition}[theorem]{Proposition}
\newtheorem{lemma}[theorem]{Lemma}
\newcommand{\R}{\mathbb R}
\newcommand{\sgn}{\mbox{sgn}}
\newcommand{\osc}{\mbox{osc}}
\numberwithin{equation}{section}
\newcommand{\commentout}[1]{}
\title[Global well-posedness for the 2D Muskat problem]{Global well-posedness for the 2D Muskat problem with slope less than 1.}
\author{Stephen Cameron}
\address{Department of Mathematics, University of Chicago, 5734 S. University Ave., Chicago, IL 60637}
\email{scameron@math.uchicago.edu}
\begin{document}

%\begin{abstracts}
%\abstractin{english}
%We establish \emph{a priori} upper bounds for solutions to the spatially inhomogeneous Landau equation in the case of moderately soft potentials, with arbitrary initial data, under the assumption that mass, energy and entropy densities stay under control. Our pointwise estimates decay polynomially in the velocity variable. We also show that if the initial data satisfies a Gaussian upper bound, this bound is propagated for all positive times.
%
%\abstractin{french}
%Nous \'etablissons des estimations \emph{a priori} pour les solutions de l'\'equation de Landau non homog\`ene en espace, dans le cas de potentiels faiblement mous, pour toute donn\'ee initiale, sous l'hypoth\`ese que la masse, l'\'energie et la densit\'e d'entropie restent contr\^ol\'ees. Nos estimations ponctuelles ont une d\'ecroissance polynomiale par rapport \`a la variable de vitesse. Nous d\'emontrons \'egalement que si la donn\'ee initiale est born\'ee par une gaussienne, alors cette borne est propag\'ee pour tous les temps positifs.
%\end{abstracts}

\begin{abstract}
We prove the existence of global, smooth solutions to the 2D Muskat problem in the stable regime whenever the product of the maximal and minimal slope is less than 1.  The curvature of these solutions solutions decays to 0 as $t$ goes to infinity, and they are unique when the initial data is $C^{1,\epsilon}$.  We do this by getting a priori estimates using a nonlinear maximum principle first introduced in \cite{Kiselev}, where the authors proved global well-posedness for the surface quasi-geostraphic equation.
\end{abstract}

\maketitle

\section{Introduction}

The Muskat problem was originally introduced by Muskat in \cite{Muskat} in order to model the interface between water and oil in tar sands.  In general, it describes the interface between two incompressible, immiscible fluids of different constant densities in a porous media.  The fluids evolve according to Darcy's law, giving an evolution of the interface (see \cite{Localwellpose} for derivation of equations), and in 2D is analogous to the two phase Hele-Shaw cell (see \cite{Hele}).  In the case that the two fluids are of equal viscosity and the interface is given by the graph $y=f(t,x)$ with the denser fluid on bottom (i.e. the stable regime), the function $f$ satisfies
\begin{equation}
f_t(t,x) = \int\limits_{\R} \frac{(f_x(t,y) - f_x(t,x))(y-x)}{(f(t,y)-f(t,x))^2 + (y-x)^2} dy,
\end{equation}
after the appropriate renormalization.  By making a change of variables, (see the proof of Lemma 5.1 of \cite{MaxPrinciple}) we get the equivalent system
\begin{equation}\label{e:fequation}
f_t(t,x) = \int\limits_{\R} \frac{f(t,y)-f(t,x) - (y-x)f_x(t,x)}{(f(t,y)-f(t,x))^2 + (y-x)^2} dy,
\end{equation}
which will be more useful for our purposes.  Since the function $f$ is Lipschitz, the above integral can be viewed as a nonlinear perturbation of the half Laplacian.  In fact, it is easy to see that linearizing around a flat solution gives
\begin{equation}
f_t(t,x) = -c(-\Delta)^{1/2}f(t,x),
\end{equation}
demonstrating the natural parabolicity of the problem.

%To date, the majority of positive results in the stable regime have relied heavily on this parabolicity, proving results either for small times, small initial data, or low regularity.

The Muskat problem is known to be locally well-posed in $H^k$ for $k\geq 3$ with solutions satisfying $L^\infty$ and $L^2$ maximum principles, but neither imply any gain of derivatives (see \cite{MaxPrinciple}, \cite{Globalold}).

%In \cite{MaxPrinciple} the authors prove local well-posedness in the case that the initial data $f_0\in H^k(\R)$ for $k\geq 3$, along with a maximum principle for $||f||_{L^\infty}$.

Under the assumption $||f_0'||_{L^\infty}<1$, there have been a number of positive results.  In \cite{Globalold} the authors prove an $L^\infty$ maximal principle for the slope $f_x$ along with the existence of global weak Lipschitz solutions using a regularized system.   Recently, \cite{Energy} improved the $L^2$ energy estimate of \cite{Globalold} (which holds for any solution) to one analogous with the energy estimate from the linear equation under this assumption on the slope.  When the initial data $f_0\in H^2(\R)$ with $||f_0||_1 = ||\ |\xi| \hat{f}_0(\xi)||_{L_\xi^1} $ less than some explicit constant $\approx 1/3$ (which implies slope less than 1), \cite{Global} proves that a unique global strong solution exists.  In this case \cite{Decay} proves optimal decay estimates on the norms $||f(t,\cdot)||_s = ||\ |\xi|^s \hat{f}(t,\xi)||_{L_\xi^1}$, matching the estimates for the linear equation.

Recently, \cite{Monotonic} was also able to prove the existence of global weak solutions for arbitrarily large monotonic initial data.  They did this using the regularized system from \cite{Globalold} to prove that both $f$ and $f_x$ still obey the maximum principle under this monotonicity assumption.

Because solutions to \eqref{e:fequation} have the natural scaling $\displaystyle\frac{1}{r}f(rt,rx)$, we see that $L^\infty$ or sign bounds on the slope $f_x$ are scale invariant properties.  We fit these two types of assumptions into the same framework by showing that the critical quantity is in fact the product of the maximal and minimal slopes,
\begin{equation}
\beta(f_0'):= (\sup\limits_x f_0'(x))(\sup\limits_y -f_0'(y)).
\end{equation}
As we shall see in section 3, the derivative $f_x$ obeys the equation
\begin{equation}\label{e:gequationK2}
(f_x)_t(t,x) = f_{xx}(t,x) \int\limits_{\R}\frac{-h}{\delta_hf(t,x)^2+h^2} dh + \int\limits_\R \delta_hf_x(t,x) K(t,x,h) dh.
\end{equation}
where $\delta_hf(t,x):=f(t,x+h)-f(t,x)$ and the kernel $K$ is uniformly elliptic of order 1 whenever $\beta(f_0')<1$.  Thus we naturally get regularizing effects from the equation whenever the initial data satisfies this bound.  It's clear that $||f_0'||_{L^\infty}<1$ implies $\beta(f_0')<1$, and for bounded monotonic data we get that $\beta(f_0')=0$ since either $\sup f_0'=0$ or $\inf f_0' =0$.  Thus this $\beta(f_0')<1$ provides a natural interpolation between these two types of assumptions.

%Because solutions to \eqref{e:fequation} have the natural scaling $\displaystyle\frac{1}{r}f(rt,rx)$, we see that $L^\infty$ bounds on the slope $f_x$ are a natural critical quantity.  Specifically, many of the previous positive results have relied on small data assumptions which imply $||f_x||_{L^\infty} <1$ (see \cite{Global},\cite{Globalold},\cite{Energy},\cite{Decay}), or as in a more recent paper (\cite{Monotonic}) on the monotinicity of the initial data, hence requiring $f_0'$ to have a fixed sign.  We fit these two types of assumptions into the same framework by showing that the critical quantity is in fact
%\begin{equation}
%\beta(f_0'):= (\sup\limits_x f_0'(x))(\sup\limits_y -f_0'(y)).
%\end{equation}
%As we shall see in section 3, the derivative $f_x$ obeys the equation
%\begin{equation}\label{e:gequationK2}
%(f_x)_t(t,x) = f_{xx}(t,x) \int\limits_{\R}\frac{-h}{\delta_hf(t,x)^2+h^2} dh + \int\limits_\R \delta_hf_x(t,x) K(t,x,h) dh.
%\end{equation}
%where $\delta_hf(t,x):=f(t,x+h)-f(t,x)$ and the kernel $K$ is uniformly elliptic of order 1 whenever $\beta(f_0')<1$.  Thus we naturally get regularizing effects from the equation whenever the initial data satisfies this bound.

In contrast to the positive results, \cite{Breakdown} shows that there is an open subset of initial data in $H^4$ such that the Rayleigh-Taylor condition breaks down in finite time.  That is, $\lim\limits_{t\to t_0-} ||f_x(t,\cdot)||_{L^\infty} = \infty$ for some time $t_0$, after which the interface between the fluids can no longer be described by a graph.

The authors of \cite{ConstantinMain} made great progress towards proving global regularity.  They proved that if the initial data $f_0\in H^k$, then the solution $f$ will exist and remain in $H^k$ so long as the slope $f_x(t,\cdot)$ remains bounded and uniformly continuous.  Thus the natural next step is to prove the generation of a modulus of continuity for $f_x$, hence

\begin{theorem}\label{t:main}
Let $f_0\in W^{1,\infty}(\R)$ with
\begin{equation}
\beta(f_0'):=(\sup\limits_x f_0'(x))(\sup\limits_y -f_0'(y))<1.
\end{equation}
Then there exists a classical solution
\begin{equation}
f\in C([0,\infty)\times \R)\cap C^{1,\alpha}_{loc}((0\,\infty)\times \R)\cap L^\infty_{loc}((0,\infty);C^{1,1}),
\end{equation}
to \eqref{e:fequation} with $f_x$ satisfying both the maximum principle and
\begin{equation}\label{e:mainresult}
f_x(t,x)-f_x(t,y) \leq \rho\left(\frac{|x-y|}{t}\right), \quad t>0, x\not = y \in \R,
\end{equation}
for some Lipschitz modulus of continuity $\rho$ depending solely on $\beta(f_0')$,$||f_0'||_{L^\infty}$.
%In the case that $f_0'$ has modulus $\rho\left(\frac{|x-y|}{\epsilon}\right)$, then we can instead put $\rho\left(\frac{|x-y|}{t+\epsilon}\right)$ in \eqref{e:mainresult}, and the solution $f\in L^\infty([0,\infty);C^{1,1})$ is unique.
In the case that $f_0\in C^{1,\epsilon}(\R)$ for some $\epsilon>0$, then the solution $f$ is unique with $f\in L^\infty([0,\infty);C^{1,\epsilon})$.
\end{theorem}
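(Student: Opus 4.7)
The plan is to follow the Kiselev--Nazarov--Volberg (KNV) nonlinear maximum principle strategy from \cite{Kiselev}, working with the evolution equation \eqref{e:gequationK2} for $f_x$. After a standard regularization, the steps are: (i) produce uniform a priori bounds on the slope, (ii) propagate a self-similar modulus of continuity for $f_x$ by a contradiction argument at a first breakthrough time, (iii) pass to the limit, and (iv) close uniqueness by a Gr\"onwall estimate in the more regular class.

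First I would mollify $f_0$ to obtain smooth data $f_0^\eps$ with $\|(f_0^\eps)'\|_{L^\infty}\le \|f_0'\|_{L^\infty}$ and $\beta((f_0^\eps)')\le \beta(f_0')+o(1)$. Local well-posedness in $H^k$ produces classical solutions $f^\eps$. From \eqref{e:gequationK2}, evaluating at a point where $f_x^\eps$ attains its spatial maximum kills the drift ($f_{xx}^\eps=0$) and makes $\delta_h f_x^\eps\le 0$, so the right-hand side is nonpositive; the analogous statement at a minimum is nonnegative. Hence $\sup_x f_x^\eps(t,\cdot)$ is nonincreasing and $\inf_x f_x^\eps(t,\cdot)$ is nondecreasing, so $\beta(f_x^\eps(t,\cdot))\le \beta((f_0^\eps)')<1$ for all $t$, and the kernel $K$ in \eqref{e:gequationK2} is uniformly elliptic of order $1$ for all time, with constants depending only on $\|f_0'\|_{L^\infty}$ and $1-\beta(f_0')$.

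The heart of the argument is to propagate the self-similar modulus $\tilde\rho(t,\xi):=\rho(\xi/t)$ for some concave, piecewise-smooth $\rho$ with $\rho(0)=0$, $\rho'(0^+)<\infty$, $\rho(\infty)=2\|f_0'\|_{L^\infty}$, and $\rho''<0$ on $(0,\infty)$. Assume, for contradiction, that \eqref{e:mainresult} first fails at $(t_0,x_0,y_0)$ for some $\eps$, so
\begin{equation*}
f_x^\eps(t_0,x_0)-f_x^\eps(t_0,y_0)=\tilde\rho(t_0,|x_0-y_0|), \qquad \xi:=|x_0-y_0|.
\end{equation*}
Subtracting \eqref{e:gequationK2} at $y_0$ from that at $x_0$ and bounding the right-hand side: the drift terms, involving $f_{xx}^\eps(t_0,x_0)$ and $f_{xx}^\eps(t_0,y_0)$, are controlled by $\rho'(\xi/t_0)/t_0$ via the breakthrough constraint; the nonlocal terms, by the KNV splitting into $|h|\le 2\xi$ and $|h|>2\xi$, use the bound $\delta_h f_x^\eps(t_0,\cdot)\le \tilde\rho(t_0,|h|)$ and the ellipticity of $K$ to produce dissipation of size roughly $\int_0^{2\xi}\frac{\tilde\rho(t_0,\xi)-\tilde\rho(t_0,h)}{h^2}dh$ from the near part (strictly negative by concavity) plus a strictly negative second-difference contribution from the tail. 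Combined with $\partial_t \tilde\rho(t_0,\xi)=-(\xi/t_0^2)\rho'(\xi/t_0)<0$, the total is strictly negative for a suitable choice of $\rho$, contradicting the fact that $(t_0,x_0,y_0)$ is a first breakthrough. This yields \eqref{e:mainresult} uniformly in $\eps$.

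Finally, the uniform bounds plus \eqref{e:fequation} give equicontinuity of $(f^\eps,f^\eps_t)$ on compact subsets of $(0,\infty)\times\R$, and Arzel\`a--Ascoli extracts a limit in the function space of the statement; the modulus \eqref{e:mainresult} survives the limit and upgrades $f$ to a classical solution away from $t=0$, while continuity down to $t=0$ comes from $f^\eps\to f_0$ uniformly. For uniqueness under $C^{1,\epsilon}$ data, one writes the equation for $f-g$, uses that the Lipschitz constant of \eqref{e:fequation} on bounded sets in $C^{1,\epsilon}$ is finite by the $C^{1,\epsilon}$ regularity preserved in time, and closes by a standard Gr\"onwall argument in $C^1$. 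The main obstacle is clearly the modulus step: one must jointly tune the shape of $\rho$ (strong enough concavity near $0$ to absorb the drift $\propto \rho'\cdot f_{xx}^\eps$) and the self-similar scaling (so that $\partial_t\tilde\rho$ is absorbed into the dissipation), while ensuring that all estimates depend only on $\beta(f_0')$ and $\|f_0'\|_{L^\infty}$, since these are the sole invariants available under the hypothesis.
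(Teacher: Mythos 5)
Your overall strategy mirrors the paper's: regularize, derive \eqref{e:gequationK2}, establish the maximum principle and uniform ellipticity of $K$, then run a Kiselev-type breakthrough argument for a self-similar modulus, pass to the limit, and invoke a uniqueness theorem. However, there is a genuine gap in the central breakthrough step.

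Your sketch of the nonlocal-term estimate treats the diffusion as if $K$ were a fixed (or at least $x$-independent) elliptic kernel: you split into $|h|\le 2\xi$ and $|h|>2\xi$, apply the modulus bound $\delta_h f_x\le \tilde\rho(t_0,|h|)$, and invoke ellipticity to produce dissipation. That is the standard KNV computation for equations of the form $\theta_t+(u\cdot\nabla)\theta+\mathcal{L}\theta=0$ with $\mathcal L$ a nice Fourier multiplier. Here, though, the kernel in \eqref{e:gequationK2} is $K(x,h)$, a fully nonlinear function of $f,f_x$, and the two diffusion integrals in $\frac{d}{dt}\bigl(f_x(\xi/2)-f_x(-\xi/2)\bigr)$ come with \emph{different} kernels $K(\xi/2,h)$ and $K(-\xi/2,h)$. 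Ellipticity alone does not let you subtract the two integrals and obtain a signed expression: you must also control $|K(\xi/2,h)-K(-\xi/2,h)|$, and that difference is not small unless you use the modulus on $f_x$ itself to compare the two kernels. This produces additional error terms (in the paper, the term $2(\Lambda-\lambda)\int_\xi^{M\xi}\frac{(\omega(h-\xi)-\omega(\xi))_+}{h^2}dh$ and the tail $\omega(\xi)\int_{M\xi}^\infty\frac{\omega(h)}{h^2}dh$) that do not appear in the scalar/divergence-free setting, and absorbing them requires a splitting at an adjustable scale $M\xi$ with $M$ chosen large depending only on the data, not a fixed $2\xi$ split. Likewise, after substituting $f_{xx}(\pm\xi/2)=\omega'(\xi)$ at the touching points, the first drift term does not collapse to something of size $\rho'(\xi/t_0)/t_0$; it becomes $\omega'(\xi)\int_\R\bigl(\frac{-h}{\delta_hf(\xi/2)^2+h^2}-\frac{-h}{\delta_hf(-\xi/2)^2+h^2}\bigr)dh$, whose two pieces are individually divergent near $h=0$, so you must symmetrize and again use the modulus on $f$ to estimate the difference. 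These are precisely the new difficulties that make this a fully nonlinear application of the method, and your proposal does not account for them.

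Two smaller points. First, the modulus $\rho$ cannot be chosen with $\rho'(0^+)<\infty$ and generic concavity; the argument needs $\rho''(0^+)=-\infty$ (e.g.\ the $\xi-\xi^{3/2}$ profile near zero) so that the near-field second-difference dissipation beats the drift and the kernel-difference error terms, and a logarithmic tail so that the far-field error integrals converge with constants you can tune. Second, the uniqueness step is not a plain Gr\"onwall argument in $C^1$: the nonlocal operator is only log-Lipschitz in the relevant topology, and the paper instead adapts the uniqueness theorem of \cite{ConstantinMain}, which requires verifying a uniform modulus for $f_x$ on $[0,T]$ (hence the separate $C^{1,\epsilon}$ modulus $\omega^{(\epsilon)}$) and removing their decay-at-infinity hypothesis via a supremum-over-near-maximizers argument.
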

The uniqueness statement follows essentially from the uniqueness theorem of \cite{ConstantinMain}.  We note in the appendix the few small changes needed to their proof in order to apply it here.

The most vital part of Theorem \ref{t:main} is the spontaneous generation of the modulus $\rho(\cdot/t)$, as everything else will follow from that.  The spontaneous generation/propogation of a general modulus of continuity has old roots as classical Holder estimates, but its only recently that the idea to tailor make moduli for specific equations emerged.  The technique first appeared in \cite{Kiselev}, where the authors used it to prove global well-posedness for the surface quasi-geostraphic equation.  It has had great success at proving regularity for a number of active scalar equations, that is equations of the form
\begin{equation}
\theta_t + (u\cdot \nabla)\theta + \mathcal{L}\theta=0,
\end{equation}
where $u$ is a flow depending on $\theta$ and $\mathcal{L}$ is some diffusive operator.  See \cite{KiselevSurvey}, \cite{Supercrit} for a good overview of results using this method.

To date, these tailor made moduli have only been applied to cases where all the nonlinearity has been in the flow velocity $u$, and the diffusive term $\mathcal{L}$ has been rather nice (typically $(-\Delta)^{\alpha}$, or at least a Fourier multiplier). We will be applying this method to $f_x$, which solves the active scalar equation \eqref{e:gequationK2}.  Note that in this equation, the kernel $K$ defined in \eqref{e:Kequation} is a highly nonlinear function of $f, f_x$.  Thus this is the first time the method has been applied in a fully nonlinear equation.

%Corollary and remarks for periodic initial data

%The uniqueness follows almost directly from the uniqueness theorem of \cite{ConstantinMain}, where we replace the decay assumption with a periodicity assumption (both exist simply to justify the existence of a maximum).  Note that by altering the definition of $\rho$ in \eqref{e:omegadefn} to be like $h^\alpha$ for small values of $h$, you can make the same argument work and prove the uniqueness of $f$ whenever $f_0\in C^{1,\alpha}$.
%
%
%
%\begin{corollary}
%Let $f_0, f$ be as in Theorem \ref{t:main}.  If $f_0$ has average $\overline{f}_0$, then as $t\to\infty$
%\begin{equation}
%f(t,\cdot)\xrightarrow[C^{1,1}]{} \overline{f}_0.
%\end{equation}
%\end{corollary}
%
%\begin{remark}
%The periodic assumption on $f_0$ is not strictly necessary; we only assume it to simplify a few technical issues a couple places in the proof.  With small adjustments, the same proof should work if you assume some kind of decay condition on $f_0'$ ( e.g. $f_0\in H^k$ as in {\bf insert examples }).
%\end{remark}

We prove Theorem \ref{t:main} by deriving a priori estimates for smooth solutions to \eqref{e:fequation} with initial data $f_0\in C^\infty_c(\R)$ depending primarily on $\beta(f_0'),||f_0'||_{L^\infty}$.  We prove enough estimates that by approximating in $W^{1,\infty}_{loc}$ with smooth compactly supported initial data, we get solutions $f^\epsilon$ which will converge along subsequences in $C^1_{loc}$ to a solution $f$ solving \eqref{e:fequation} for arbitrary initial data $f_0\in W^{1,\infty}(\R)$ with $\beta(f_0')<1$.

The rest of the paper is organized as follows.  We begin by repeating the breakthrough argument of \cite{Kiselev} in Section 2.  In Section 3, we differentiate \eqref{e:fequation} to derive the equation for $f_x$, showing that it satisfies the maximum principle when $\beta(f_0')<1$.  In Section 4, we state how a modulus of continuity $\omega$ interacts with the equation in our main technical lemma.  In Sections 5 and 6 we then derive the bounds on the drift and diffusion terms necessary to prove that lemma.  In Section 7, we apply our main technical lemma to a specific modulus of continuity, and finally in Section 8 we complete the proof of \eqref{e:mainresult} by choosing the correct modulus $\rho$.  In Section 9, we then use \eqref{e:mainresult} to prove a few estimates on regularity in time, guaranteeing enough compactness to prove that there are classical solutions for rough initial data.  Finally in the appendix, we give a quick outline for how to modify the uniqueness proof of \cite{ConstantinMain} to work for initial data $f_0\in C^{1,\epsilon}(\R)$ with $\beta(f_0')<1$.

\section{Breakthrough Scenario}

Assume that $f_0\in C^\infty_c(\R)$ with $\beta(f_0')<1$, so that there exists a solution $f\in C^1((0,T_+); H^k)$ for $k$ arbitrarily large and some $T_+>0$ by \cite{MaxPrinciple}.  Note that under the assumption that $\beta(f_0')<1$, we will show that the maximum principle holds (see Section 3 Proposition \ref{p:maximumprinciple}) and hence $||f_x||_{L^\infty([0,T_+)\times \R)}\leq ||f_0'||_{L^\infty}$ is uniformly bounded.  Fix a Lipschitz modulus $\rho$ which we will define later. For sufficiently small times, $f_x(t,\cdot)$ will have modulus $\rho(\cdot/t)$ since it is smooth and bounded.  It then follows by the main theorem of \cite{ConstantinMain} that as long as $f_x(t,\cdot)$ continues to have modulus $\rho(\cdot/t)$, the solution $f$ will exist with $T_+>t$.

So, we proceed as in \cite{Kiselev}'s proof for quasi-geostraphic equation.  Suppose that $f_x(t,\cdot)$ satisfies \eqref{e:mainresult} for all $t<T$.  Then by continuity,
\begin{equation}
f_x(T,x)-f_x(T,y) \leq \rho\left(\frac{|x-y|}{T}\right), \quad \forall x\not = y\in \R.
\end{equation}
We first prove that if we have the strict inequality $f_x(T,x)-f_x(T,y) < \rho\left(|x-y|/T\right)$, then $f_x(t,\cdot)$ will have modulus $\rho(\cdot/ t)$ for $t\leq T+\epsilon$.

\begin{lemma}
Let $f\in C([0,T_+); C^3_0(\R))$, and $T\in (0,T_+)$.  Suppose that $f(T,\cdot)$ satisfies
\begin{equation}
f_x(T,x)-f_x(T,y) < \rho\left(|x-y|/T\right), \forall x\not = y\in \R,
\end{equation}
for some Lipschitz modulus of continuity $\rho$ with $\rho''(0) = -\infty$.  Then
\begin{equation}
f_x(T+\epsilon, x) - f_x(T+\epsilon, y) < \rho(|x-y| / (T+\epsilon)), \forall x\not = y\in \R,
\end{equation}
for all $\epsilon>0$ sufficiently small.
\end{lemma}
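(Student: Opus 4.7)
The plan is to propagate the strict inequality in time by combining uniform continuity of $f_x,f_{xx}$ in $t$ with a three-range decomposition of $|x-y|$ into near-diagonal, mid-range, and far-field pieces. Set $G(t,x,y):=\rho(|x-y|/t)-(f_x(t,x)-f_x(t,y))$, so that the hypothesis reads $G(T,\cdot,\cdot)>0$ off the diagonal and the desired conclusion is $G(T+\epsilon,\cdot,\cdot)>0$ for $\epsilon>0$ small. Since $f\in C([0,T_+);C^3_0(\R))$, both $f_x$ and $f_{xx}$ are jointly continuous in $(t,x)$ and share a common spatial support on any short time window, so $G$ varies continuously with $t$ uniformly in $(x,y)$ on every set that stays bounded away from the diagonal $x=y$ and away from infinity. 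Strict positivity of $G$ at time $T$ therefore propagates to $T+\epsilon$ on such a set by routine continuity.

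For $|x-y|$ large, compact support plus uniform boundedness of $f_x$ give $f_x(t,x)-f_x(t,y)\le 2\|f_x(t,\cdot)\|_\infty$ with the right side continuous in $t$, while the hypothesis (taken in the limit $|x-y|\to\infty$) forces $\lim_{\xi\to\infty}\rho(\xi)>2\|f_x(T,\cdot)\|_\infty$ (vacuous if $\rho$ is unbounded); choosing a large threshold $R$ and using continuity of $\|f_x(t,\cdot)\|_\infty$ then secures $G(T+\epsilon,x,y)>0$ for $|x-y|\ge R$. The mid-range $r(T+\epsilon)\le|x-y|\le R$ reduces, via compact support of $f_x$, to a genuinely compact set on which uniform continuity in $t$ preserves the strict positivity of $G(T,\cdot,\cdot)$.

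The delicate piece is $|x-y|$ small, where the condition $\rho''(0)=-\infty$ is essential and which is the main obstacle. Write $\rho(\xi)=\rho'(0^+)\xi-\psi(\xi)$ with $\psi\ge 0$; the condition $\rho''(0)=-\infty$ is then equivalent to $\psi(\xi)/\xi^2\to\infty$ as $\xi\to 0^+$ (in the case $\rho'(0^+)<\infty$; if instead $\rho'(0^+)=\infty$, then $\rho(\xi)/\xi\to\infty$ makes the near-diagonal estimate immediate). My plan is to first extract from the hypothesis the uniform strict Hessian bound $\|f_{xx}(T,\cdot)\|_\infty<\rho'(0^+)/T$: Taylor expanding $f_x(T,x)-f_x(T,y)=f_{xx}(T,x)(x-y)+O((x-y)^2)$ and substituting into $G(T,x,y)>0$, an equality $f_{xx}(T,x_0)=\rho'(0^+)/T$ at some point $x_0$ would, on sending $y\to x_0$, render the quadratic remainder negligible compared with $\psi(|x-y|/T)$ and force $G(T,x_0,y)<0$, contradicting the hypothesis. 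Compact support of $f_{xx}$ upgrades this pointwise strict bound to a uniform one with a positive gap, and continuity of $f_{xx}$ in $t$ together with $\rho'(0^+)/(T+\epsilon)<\rho'(0^+)/T$ then yields $\|f_{xx}(T+\epsilon,\cdot)\|_\infty<\rho'(0^+)/(T+\epsilon)$ with margin for $\epsilon$ small. Combined with $\psi(\xi)/\xi\to 0$ at $0$, this delivers $G(T+\epsilon,x,y)>0$ for $|x-y|/(T+\epsilon)$ below a threshold, completing the proof.
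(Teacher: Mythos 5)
Your proof takes essentially the same three-range decomposition as the paper's (near-diagonal via the bound $\|f_{xx}(T,\cdot)\|_{L^\infty} < \rho'(0)/T$ forced by $\rho''(0)=-\infty$, far-field via decay of $f_x$, and a compact remainder handled by uniform continuity in $t$) and it is correct. One small slip worth fixing: the hypothesis does not force $\lim_{\xi\to\infty}\rho(\xi) > 2\|f_x(T,\cdot)\|_{L^\infty}$ --- it only yields $\lim_{\xi\to\infty}\rho(\xi) > \mbox{osc}_{\R}\, f_x(T,\cdot)$, which can be strictly smaller, but is exactly the bound you need for the far-field estimate; likewise $f\in C([0,T_+);C^3_0(\R))$ gives decay at infinity rather than compact support, which still lets the oscillation be attained, so the argument goes through with these replacements.
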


\begin{proof}
To begin, note that for any compact compact subset $K\subset \R^2\setminus \{(x,x)|x\in \R\}$,
\begin{equation}
f_x(T,x)-f_x(T,y) < \rho(|x-y|/T) \quad \forall (x,y)\in K \quad \Rightarrow \quad f_x(T+\epsilon, x)-f_x(T+\epsilon,y) < \rho(|x-y|/(T+\epsilon)) \quad \forall (x,y)\in K,
\end{equation}
for $\epsilon >0$ sufficiently small by uniform continuity.  So, we only need to focus on pairs $(x,y)$ that are either close to the diagonal, or that are large.

To handle $(x,y)$ near the diagonal, we start by noting that $f(T,\cdot)\in C^3(\R)$ and $\rho''(0)=-\infty$.  Thus for every $x$ we get that
\begin{equation}
|f_{xx}(T,x)| < \frac{\rho'(0)}{T}.
\end{equation}
Since $f\in C([0,T_+); C^3_0(\R))$, $f_{xx}(T,x)\to 0$ as $x\to \infty$.  Thus we can take the point where $\max\limits_x |f_{xx}(T,x)|$ is achieved to get that
\begin{equation}
||f_{xx}(T,\cdot)||_{L^\infty} < \frac{\rho'(0)}{T}.
\end{equation}
By continuity of $f_{xx}$, we thus have $||f_{xx}(T+\epsilon,\cdot)||_{L^\infty} < \displaystyle\frac{\rho'(0)}{T+\epsilon}$ for $\epsilon>0$ sufficiently small.  Hence,
\begin{equation}\label{e:smallxypropogation}
f_x(T+\epsilon,x)-f_x(T+\epsilon,y) < \rho\left(\frac{|x-y|}{T+\epsilon}\right), \quad |x-y|<\delta,
\end{equation}
for $\epsilon, \delta$ sufficiently small.

Now let $R_1,R_2>0$ be such that
\begin{equation}
\rho(R_1/ (T+\epsilon)) > \osc_\R f_x(T+\epsilon,\cdot),
\end{equation}
and that $|x|>R_2$ implies
\begin{equation}
|f_x(T+\epsilon,x)| < \frac{\rho(\delta/(T+\epsilon))}{2},
\end{equation}
for $\epsilon>0$ sufficiently small.
Taking $R = R_1+R_2$, it's easy to check that $|x|>R$ implies that
\begin{equation}
|f_x(T+\epsilon,x) - f_x(T+\epsilon,y)| < \rho(|x-y|/(T+\epsilon)), \quad \forall y\not = x.
\end{equation}

Finally, taking $K= \{ (x,y)\in \R^2: |x-y|\geq \delta, x,y\in \overline{B_R}\}$, we're done.

\end{proof}

Thus by the lemma, if $f_x$ was to lose its modulus after time $T$, we must have that there exist $x\not = y\in \R$ with
\begin{equation}\label{e:modulusequality}
f_x(T,x) - f_x(T,y) = \rho\left(\frac{|x-y|}{T}\right).
\end{equation}
We will show for a smooth solution $f$ of \eqref{e:fequation} and the correct choice of $\rho$ that in this case
\begin{equation}\label{e:finalinequality}
\frac{d}{dt} \left(f_x(t,x) - f_x(t,y)\right)\bigg|_{t=T} < \frac{d}{dt}\left(\rho\left(\frac{|x-y|}{t}\right)\right)\bigg|_{t=T},
\end{equation}
contradicting the fact that $f_x$ had modulus $\rho(\cdot /t)$ for time $t<T$.

Thus we just need to prove \eqref{e:finalinequality} to complete the proof of the generation of modulus of continuity \eqref{e:mainresult} of Theorem \ref{t:main}.

\section{Equation for $f_x$}

So, we just need to prove \eqref{e:finalinequality}.  To begin, we need to examine the equation that $f_x$ solves.
Since everything we will be doing is for some fixed time $T>0$, we will suppress the time variable from now on.  Differentiating  \eqref{e:fequation}, we see that $f_x$ solves
\begin{equation}\label{e:gequationy}
\begin{split}
(f_x)_t(x) = f_{xx}(x) &\int\limits_{\R} \frac{x-y}{(f(y)-f(x))^2 + (y-x)^2} dy
\\&+ \int\limits_{\R}\left(f(y)-f(x) - (y-x)f_x(x)\right)\frac{2\left((f(y)-f(x))f_x(x)+(y-x)\right)}{\left((f(y)-f(x))^2 + (y-x)^2\right)^2} dy.
\end{split}
\end{equation}

To simplify notation, we reparametrize \eqref{e:gequationy} by taking $y=x+h$, and letting $$\delta_hf(x):= f(x+h) -f(x),$$ we get
\begin{equation}\label{e:gequationh}
\begin{split}
(f_{x})_t(x) = f_{xx}(x) &\int\limits_{\R} \frac{-h}{(\delta_hf(x))^2 + h^2} dh
\\&+ \int\limits_{\R}\left(\delta_hf(x) - hf_x(x)\right)\frac{2\left(\delta_hf(x)f_x(x)+h\right)}{\left(\delta_hf(x)^2 + h^2\right)^2} dh.
\end{split}
\end{equation}

Note that $$\delta_hf(x) - hf_x(x) = \int\limits_0^h \delta_s f_x(x) ds,$$ for $h>0$, and $$\delta_hf(x) - hf_x(x) = -\int\limits_h^0 \delta_s f_x(x) ds,$$ for $h<0$.

With that in mind, define
\begin{equation}\label{e:kequation}
k(x,s) = \frac{2\left(\delta_sf(x)f_x(x)+s\right)}{\left(\delta_sf(x)^2 + s^2\right)^2},
\end{equation}
and
\begin{equation}\label{e:Kequation}
K(x,h) = \left\{ \begin{array}{cl} \int\limits_h^\infty k(x,s)ds, & h>0 \\ \int\limits_{-\infty}^h -k(x,s) ds, & h<0 \end{array}\right. .
\end{equation}
Then integrating \eqref{e:gequationh} by parts, we have that $f_x$ solves the equation
\begin{equation}\label{e:gequationK}
(f_x)_t(x) = f_{xx}(x) \int\limits_{\R}\frac{-h}{\delta_hf(x)^2+h^2} dh + \int\limits_\R \delta_hf_x(x) K(x,h) dh.
\end{equation}

As
\begin{equation}
\frac{-\beta(f_x)}{s}\leq \frac{f_x(x)\delta_sf(x)}{s}\leq  \frac{||f_x||_{L^\infty}^2}{s},
\end{equation}
we see that
$$\frac{2(1-\beta(f_x))}{(1+||f_x||_{L^\infty}^2)^2} \frac{1}{|s|^3} \leq \sgn(s) k(x,s)\leq \frac{2(1+||f_x||_{L^\infty}^2)}{|s|^3},$$
and hence
\begin{equation}
\frac{1-\beta(f_x)}{(1+||f_x||_{L^\infty}^2)^2} \frac{1}{h^2} \leq K(x,h)\leq \frac{1+||f_x||_{L^\infty}^2}{h^2}.
\end{equation}
Thus in the case that $\beta(f_x)\leq 1$, we then have that the kernel $K$ is a nonnegative, from which we get immediately
\begin{proposition}\label{p:maximumprinciple}(Maximum Principle)

Let $f_x$ be a sufficiently smooth solution to \eqref{e:gequationK} with $\beta(f_0')\leq 1$. Then for any $0\leq s\leq t$, we have that
\begin{equation}
\inf\limits_y f_x(s,y)\leq \inf\limits_y f_x(t,y)\leq \sup\limits_y f_x(t,y)\leq \sup\limits_y f_x(s,y).
\end{equation}
\end{proposition}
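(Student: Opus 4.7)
The proof rests on the nonnegativity of $K$ just established: whenever $\beta(f_x(t,\cdot)) \leq 1$, the kernel $K(\cdot,h) \geq 0$. My plan is to show that under this nonnegativity the evolution \eqref{e:gequationK} obeys a classical pointwise maximum principle, then close a short continuation argument to see that the hypothesis $\beta(f_x(t,\cdot)) \leq 1$ propagates in time.

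For the core step, fix $t_0 \geq 0$ and suppose the supremum $M(t_0) := \sup_y f_x(t_0,y)$ is attained at some point $x_0 \in \R$. Then $f_{xx}(t_0,x_0) = 0$, so the first term on the right of \eqref{e:gequationK} vanishes. Meanwhile $\delta_h f_x(t_0,x_0) = f_x(t_0,x_0+h) - f_x(t_0,x_0) \leq 0$ for every $h \in \R$, and since $K(x_0,h) \geq 0$ the integrand of the diffusion term is nonpositive. Hence $(f_x)_t(t_0,x_0) \leq 0$, and a symmetric argument at any minimizer of $f_x(t_0,\cdot)$ gives $(f_x)_t \geq 0$ there. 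Because $M(t)$ and $m(t) := \inf_y f_x(t,y)$ are Lipschitz in $t$ for smooth solutions, I can promote these pointwise inequalities to $M$ non-increasing and $m$ non-decreasing, which is the stated monotonicity.

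The only technical wrinkle is that the supremum and infimum need not a priori be attained at finite points. Using the regularity inherited from the local theory with smooth, compactly supported initial data, $f_x(t,\cdot)$ decays at infinity, so any strictly positive value of $M$ or strictly negative value of $m$ is attained at a finite point; the trivial cases $M \leq 0$ or $m \geq 0$ require no argument for the corresponding bound. For the continuation, a quick case analysis on the signs of $M(t)$ and $m(t)$ shows that the bounds $M(t) \leq \sup f_0'$ and $m(t) \geq \inf f_0'$ imply $\beta(f_x(t,\cdot)) = M(t)\cdot(-m(t)) \leq \beta(f_0') \leq 1$ (either both factors are nonnegative and one multiplies the bounds, or the product is already nonpositive), so the kernel remains nonnegative and the argument is self-consistent on the full lifespan of the solution. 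I expect essentially no serious obstacle here; this is the easy side of the theory, with the real difficulty postponed to the modulus of continuity arguments of later sections.
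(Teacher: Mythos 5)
Your proposal is correct and is exactly the natural elaboration of the paper's one-line claim: the paper observes that $\beta(f_x)\leq 1$ makes the kernel $K$ nonnegative and then asserts the maximum principle follows "immediately," with the intended mechanism being precisely your argument — at a maximizer $f_{xx}=0$ kills the drift term, nonpositivity of $\delta_h f_x$ against nonnegative $K$ kills the diffusion term, and a continuation argument propagates $\beta(f_x(t,\cdot))\leq 1$ forward in time. Your handling of the technical points (attainment of the sup via decay of $C_c^\infty$-generated solutions, Lipschitz-in-$t$ of $M(t)$ and $m(t)$, and the sign case analysis that closes the bootstrap on $\beta$) is sound and matches what the paper leaves implicit.
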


In particular, since $\beta(f_0')<1$ the maximum principle tells us that
\begin{equation}
\beta(f_x)\leq \beta(f_0')<1, \qquad \qquad ||f_x||_{L^\infty}\leq ||f_0'||_{L^\infty}<\infty.
\end{equation}

Thus we get that
\begin{equation}\label{e:Klowerbound}
0<\frac{\lambda}{h^2}\leq K(x,h) \leq \frac{\Lambda}{h^2},
\end{equation}
where
\begin{equation}\label{e:lambda}
\lambda = \frac{1-\beta(f_0')}{(1+||f_0'||_{L^\infty}^2)^2}, \quad \Lambda = 1+||f_0'||_{L^\infty}^2.
\end{equation}
Thus $K$ is comparable to the kernel for $(-\Delta)^{1/2}$, so $f_x$ solves the uniformly elliptic equation \eqref{e:gequationK}.  Note that the sole reason we require $\beta(f_0')<1$ is to ensure this ellipticity of $K$.

%Noting that both $f$ and $f_x$ satisfy equations defined by integrating against a nonnegative kernel, we immediately get
%\begin{proposition}(Maximum Principles)
%Let $f$ be a smooth solution to \eqref{e:fequation}.  Then
%\begin{equation}\label{e:fmaxprinciple}
%||f(t,\cdot)||_{L^\infty} \leq ||f_0||_{L^\infty}.
%\end{equation}
%If in addition $||f_0'||_{L^\infty}<1$, then
%\begin{equation}\label{e:gmaxprinciple}
%||f_x(t,\cdot)||_{L^\infty} \leq ||f_0'||_{L^\infty},
%\end{equation}
%as well.
%\end{proposition}

\section{Moduli Estimates}

Our goal is to show that if $f_x(T,\cdot)$ has modulus $\rho(\cdot/T)$ and equality is achieved at two points \eqref{e:modulusequality}, then \eqref{e:finalinequality} must hold, contradicting the assumptions of the breakthrough argument (see section 2).  To that end, we first need to understand how a modulus of continuity interacts with the equation for $f_x$ \eqref{e:gequationK}.  Hence,
\begin{lemma}\label{l:omegabound}
Let $f: [0,\infty)\times \R\to \R$ be a bounded smooth solution to \eqref{e:fequation} with $\beta(f_0')<1$, and $\omega: [0,\infty)\to [0,\infty)$ be some fixed modulus of continuity.  Assume that at some fixed time $T$ that
\begin{equation}\label{e:omegaassumptions}
\begin{split}
\delta_hf_x(T,x)\leq \omega(|h|),
\\ f_x(T,\xi/2)-f_x(T,-\xi/2) = \omega(\xi),
\end{split}
\end{equation}
for all $h\in \R$, and for some $\xi>0$.  Then
\begin{equation}\label{e:omegabound}
\begin{split}
\frac{d}{dt}(f_x(t,\xi/2)-f_x(t,-\xi/2))\bigg|_{t=T} \leq & A\omega'(\xi)\left(\int\limits_0^\xi \frac{\omega(h)}{h}dh + \xi\int\limits_{\xi}^\infty \frac{\omega(h)}{h^2} dh  + \ln(M+1)\omega(\xi)\right)
\\&+A\omega(\xi)\int\limits_{M\xi}^\infty \frac{\omega(h)}{h^2}dh +2(\Lambda-\lambda)\int\limits_{\xi}^{M\xi} \frac{(\omega(h-\xi) - \omega(\xi))_+}{h^2}dh
\\&+2\lambda\int\limits_0^\xi \frac{\delta_h\omega(\xi) + \delta_{-h}\omega(\xi)}{h^2}dh + 2\lambda\int\limits_{\xi}^\infty \frac{\omega(h+\xi)-\omega(h)-\omega(\xi)}{h^2}dh,
\end{split}
\end{equation}
for any $M\geq 1$, where $A$ depends only on $||f_0'||_{L^\infty}$ and $\lambda, \Lambda$ are as in \eqref{e:lambda}.
\end{lemma}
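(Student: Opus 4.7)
The plan is to apply the equation \eqref{e:gequationK} at the two points $x_1=\xi/2$ and $x_2=-\xi/2$ and subtract, then estimate each resulting piece. The key preliminary observation is that the equality $f_x(T,\xi/2)-f_x(T,-\xi/2)=\omega(\xi)$, combined with the global bound $\delta_h f_x(T,x)\le\omega(|h|)$, means that the map $x\mapsto f_x(T,x+\xi/2)-f_x(T,x-\xi/2)$ achieves its maximum at $x=0$. Differentiating gives $f_{xx}(T,\xi/2)=f_{xx}(T,-\xi/2)$, so the drift contributions share a common coefficient $a$ which, using the extremality together with the modulus bound, should satisfy a sharp estimate of the form $|a|\le C\omega'(\xi)$; this is what will produce the $\omega'(\xi)$ prefactors in the first line of \eqref{e:omegabound}.

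For the drift term, since $f_{xx}(\xi/2)=f_{xx}(-\xi/2)=a$, the contribution is $a$ times the difference of two $f$-dependent principal-value integrals of the shape $\int \frac{-h}{(\delta_h f)^2+h^2}\,dh$. Splitting the $h$-integration into the regions near $0$, in $[\xi,M\xi]$, and beyond $M\xi$, and using the Lipschitz bound on $f$ together with the modulus of $f_x$, should give exactly the three pieces $\int_0^\xi \omega(h)/h\,dh$, $\xi\int_\xi^\infty \omega(h)/h^2\,dh$, and $\ln(M+1)\omega(\xi)$ multiplying $A\omega'(\xi)$. The detailed kernel estimates needed here are what Section 5 is set up to provide.

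For the diffusion term, the central idea is to exploit \eqref{e:Klowerbound} by writing $K(x,h)=\lambda/h^2+R(x,h)$ with $0\le R\le (\Lambda-\lambda)/h^2$. The isotropic baseline $\lambda/h^2$ is $x$-independent, so after symmetrizing in $h\mapsto -h$ its contribution becomes $\lambda\int_0^\infty h^{-2}\bigl(D(\xi/2,h)-D(-\xi/2,h)\bigr)\,dh$, where $D(x,h)=f_x(x+h)+f_x(x-h)-2f_x(x)$. A case split on $|h|\lessgtr\xi$, pairing each of the four points $\pm\xi/2\pm h$ with one of $\pm\xi/2$ so as to exploit the saturated modulus at distance $\xi$, produces precisely the two good $2\lambda$ integrals at the end of \eqref{e:omegabound}. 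The residual kernel $R$ in the medium range $h\in[\xi,M\xi]$ is bounded trivially by $(\Lambda-\lambda)/h^2$ paired with $\delta_h f_x(x)\le \omega(|h-\xi|)-\omega(\xi)$ (after factoring in the saturation gap), giving the middle $(\Lambda-\lambda)$ term; in the far range $|h|>M\xi$ one uses the pointwise decay of $K$ together with the Section 6 estimates comparing $K(\xi/2,h)$ and $K(-\xi/2,h)$ to produce the $A\omega(\xi)\int_{M\xi}^\infty\omega(h)/h^2\,dh$ term.

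The main obstacle, relative to the original Kiselev--Nazarov--Volberg argument for SQG, is that the diffusion kernel $K(x,h)$ depends nonlinearly on $f$ through the quantities $\delta_h f(x)$ and $f_x(x)$, so it is genuinely $x$-dependent and not translation invariant. Straightforward symmetrization fails because $K(\xi/2,h)\ne K(-\xi/2,h)$, and the split $K=\lambda/h^2+R$ only postpones the issue: it forces us to control $R$ at every scale, and in particular to introduce an auxiliary cutoff $M\xi$ so that the medium-range loss of $(\Lambda-\lambda)$ (rather than $\lambda$) and the far-range behavior can be balanced, at the cost of a logarithmic correction $\ln(M+1)$ and the need for the oscillation-of-$K$ estimates to be established in Sections 5 and 6.
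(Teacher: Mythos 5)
Your overall scaffolding matches the paper's: two-sided touching forces $f_{xx}(T,\xi/2)=f_{xx}(T,-\xi/2)$, the right-hand side of \eqref{e:gequationK} is split into drift and diffusion, the uniform ellipticity $\lambda/h^2\le K\le\Lambda/h^2$ extracts the good $\lambda$-weighted integrals, and a cutoff $M\xi$ separates medium from far range. Two points, however, need repair.

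First, a small but load-bearing inaccuracy: the common second derivative is not merely bounded by $C\omega'(\xi)$ — it equals $\omega'(\xi)$ exactly. This follows from the fact that $\omega$ touches $f_x$ from above at $\xi/2$ (i.e. $\delta_hf_x(\xi/2)\le\delta_h\omega(\xi)$ for $h>-\xi$) and $-\omega$ touches from below at $-\xi/2$; dividing by $h$ and letting $h\to 0^\pm$ pins down the value. The exact value is what makes the subtraction $\delta_h f_x(\pm\xi/2)-h\omega'(\xi)$ sign-definite in the diffusion term, which is the engine of the whole estimate, so you cannot afford to lose it to a constant.

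Second, and more seriously, your medium-range plan as written does not recover the stated bound. Once you subtract $h\omega'(\xi)$ to get signs, the decomposition \eqref{e:driftanddiffusion} produces an \emph{extra} piece
\begin{equation*}
\omega'(\xi)\int_{-M\xi}^{M\xi}h\bigl(K(\xi/2,h)-K(-\xi/2,h)\bigr)\,dh ,
\end{equation*}
and it is this piece — not the original drift $\int\frac{-h}{(\delta_hf)^2+h^2}\,dh$ — that is responsible for the $\ln(M+1)\omega(\xi)$ term in \eqref{e:omegabound}. If you estimate it (or the corresponding $R$-contribution) by the trivial bound $|K(\xi/2,h)-K(-\xi/2,h)|\le(\Lambda-\lambda)/h^2$, you obtain $(\Lambda-\lambda)\omega'(\xi)\ln M$ with no $\omega(\xi)$ factor, which is strictly worse than the claim when $\omega(\xi)$ is small, and in particular does not prove Lemma \ref{l:omegabound} as stated. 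The missing ingredient is the kernel-closeness estimate \eqref{e:kbound2}, which gives
\begin{equation*}
|K(\xi/2,h)-K(-\xi/2,h)|\ \lesssim\ \frac{\omega(\xi)}{h^2}+\frac{\xi\,\omega(h)}{h^3},
\end{equation*}
and it is the $\omega(\xi)/h^2$ term here, integrated against $h\omega'(\xi)$ over $\xi<|h|<M\xi$, that produces $\omega'(\xi)\omega(\xi)\ln M$. In other words, the closeness of the two kernels in the medium range is not a luxury you can discard in favor of the crude ellipticity bound — it is precisely what supplies the extra $\omega(\xi)$. Relatedly, the same Section 5 comparison is what you should invoke for the middle $(\Lambda-\lambda)$ term via the dichotomy on $\sgn\bigl(K(\xi/2,h)-K(-\xi/2,h)\bigr)$: depending on that sign, one pairs the difference $\delta_hf_x(\xi/2)-\delta_hf_x(-\xi/2)\le 0$ with whichever kernel is smaller, reducing the leftover to the kernel difference rather than to $(\Lambda-\lambda)/h^2$ applied to both pieces simultaneously.
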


This is the main technical lemma that we need.  Since solutions to \eqref{e:fequation} are closed under translation and sign change, it suffices to consider the above situation for our proof of \eqref{e:finalinequality}.

Note that (4.2) holds for any value of the parameter $M\geq 1$.  Later in Lemma 6.1, we will essentially use two different values of $M$ depending on the size of $\xi$.  In the small $\xi$ regime we can simply take $M=1$, but in the large $\xi$ regime we will need to take $M$ to be a sufficiently large constant depending only on initial data (but not on exact size of $\xi$) in order to control the size of the error term $\omega(\xi)\int\limits_{M\xi}^\infty \frac{\omega(h)}{h^2}dh$.

The proof for Lemma \ref{l:omegabound} is essentially a nondivergence form argument; our function $f_x$ is touched from above at $\xi/2$ by our modulus $\omega$, and its touched from below at $-\xi/2$ by $-\omega$.  Specifically,
\begin{equation}\label{e:touchfromabove}
\begin{split}
\delta_h f_x(\xi/2)\leq \delta_h\omega(\xi), \qquad \forall h>-\xi,
\\ \delta_h f_x(-\xi/2) \geq -\delta_{-h}\omega(\xi), \qquad \forall h<\xi.
\end{split}
\end{equation}
From \eqref{e:touchfromabove}, we want to derive as much information as we can and bound $\displaystyle\frac{d}{dt}(f_x(\xi/2)-f_x(-\xi/2))$.  To that end, by dividing \eqref{e:touchfromabove} through by $h$ and taking the limit as $h\to 0$, we then get that
\begin{equation}
f_{xx}(\xi/2) = f_{xx}(-\xi/2) = \omega'(\xi).
\end{equation}

Hence by our equation for $f_x$ \eqref{e:gequationK}, we have that
\begin{equation}\label{e:driftanddiffusion}
\begin{split}
\frac{d}{dt}(f_x(\xi/2)&-f_x(-\xi/2)) = \omega'(\xi)\int\limits_\R \left(\frac{-h}{\delta_hf(\xi/2)^2+h^2} - \frac{-h}{\delta_{h}f(-\xi/2)^2+h^2}\right)dh
\\&\qquad\qquad\qquad\qquad\qquad + \int\limits_{\R} \delta_hf_x(\xi/2)K(\xi/2,h) - \delta_hf_x(-\xi/2)K(-\xi/2,h)dh
\\&=\omega'(\xi)\int\limits_\R \left(\frac{-h}{\delta_hf(\xi/2)^2+h^2} - \frac{-h}{\delta_{h}f(-\xi/2)^2+h^2}\right)dh
+\omega'(\xi)\int\limits_{-M\xi}^{M\xi} \left(hK(\xi/2,h) - hK(-\xi/2,h)\right)dh
\\&\quad + \int\limits_{-M\xi}^{M\xi} (\delta_hf_x(\xi/2)-h\omega'(\xi))K(\xi/2,h) - (\delta_hf_x(-\xi/2)-h\omega'(\xi))K(-\xi/2,h)dh
\\&\quad +\int\limits_{|h|>M\xi} \delta_hf_x(\xi/2)K(\xi/2,h) -\delta_hf_x(-\xi/2)K(-\xi/2,h)dh,
\end{split}
\end{equation}
for any $M\geq 1$.  The first two terms of the RHS of \eqref{e:driftanddiffusion} act as a drift, giving rise to the first two error terms of \eqref{e:omegabound}.  The latter two terms of \eqref{e:driftanddiffusion} act as a diffusion, giving rise to both the helpful (negative) terms in \eqref{e:omegabound}, as well as additional error terms (the middle terms of \eqref{e:omegabound}) arising from the difference in the kernels, $|K(\xi/2,h)-K(-\xi/2,h)|$.

\section{Bounds on Drift terms}

We begin proving Lemma \ref{l:omegabound} by bounding the drift terms of \eqref{e:driftanddiffusion}, starting with

\begin{lemma} \label{l:driftone}
Under the assumptions of Lemma \ref{l:omegabound},

\begin{equation} \label{e:firstdrifttermfinal}
\omega'(\xi)\bigg|\int\limits_{\R}\frac{-h}{\delta_hf(\xi/2)^2+h^2} - \frac{-h}{\delta_{h}f(-\xi/2)^2+h^2}dh\bigg| \lesssim \omega'(\xi)\left(\int\limits_0^\xi \frac{\omega(h)}{h}dh + \xi\int\limits_\xi^\infty \frac{\omega(h)}{h^2}dh\right).
\end{equation}
\end{lemma}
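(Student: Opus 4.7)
The plan is to rewrite the integrand using the algebraic identity
\[
\frac{-h}{a^2+h^2}-\frac{-h}{b^2+h^2} = \frac{h(a-b)(a+b)}{(a^2+h^2)(b^2+h^2)}, \qquad a := \delta_h f(\xi/2),\quad b := \delta_h f(-\xi/2),
\]
and to split the integral over $h\in\R$ into the regions $|h|\le\xi$ and $|h|>\xi$. Two representations of $a-b$ will drive the estimates: (i) $a-b = \int_{-\xi/2}^{\xi/2}[f_x(y+h)-f_x(y)]\,dy$, giving $|a-b|\le\xi\omega(|h|)$ by the assumed modulus on $f_x$, and (ii) $a-b = \int_0^h[f_x(\xi/2+s)-f_x(-\xi/2+s)]\,ds$, giving $|a-b|\le|h|\omega(\xi)$. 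Together with $|a+b|\le 2|h|\,||f_x||_{L^\infty}$ and the crude denominator estimate $(a^2+h^2)(b^2+h^2)\ge h^4$, these will suffice.

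For the tail $|h|>\xi$ I would use (i), so that the integrand is bounded by a constant multiple of $||f_x||_{L^\infty}\,\xi\,\omega(|h|)/h^2$; this integrates to a multiple of $\xi\int_\xi^\infty\omega(h)/h^2\,dh$, matching the second target term.

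The inner region $|h|\le\xi$ is the main obstacle. Plugging (ii) directly into the denominator bound gives an integrand $\sim\omega(\xi)/|h|$ which is not integrable at the origin; the saving grace is that the original integrand has an odd $1/h$ singularity at $h=0$ (with coefficient of order $\omega(\xi)$), so the $h$-integral must be interpreted as a principal value, and pairing $h\leftrightarrow -h$ produces a symmetric kernel. Concretely, at each $x\in\{\pm\xi/2\}$,
\[
\frac{-h}{\delta_h f(x)^2+h^2}+\frac{h}{\delta_{-h}f(x)^2+h^2} = \frac{h(p^2-\tilde q^2)}{(p^2+h^2)(\tilde q^2+h^2)}, \qquad p:=\delta_h f(x),\ \tilde q:=\delta_{-h}f(x),
\]
and factoring $p^2-\tilde q^2 = (p-\tilde q)(p+\tilde q)$ exposes the crucial cancellation: $|p-\tilde q|=|f(x+h)-f(x-h)|\le 2|h|\,||f_x||_{L^\infty}$, while $p+\tilde q = f(x+h)+f(x-h)-2f(x) = \int_0^h[f_x(x+s)-f_x(x-s)]\,ds$ is a second-order increment which the modulus controls by $\int_0^h\omega(2s)\,ds\lesssim h\omega(h)$. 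Combined with the $h^4$ denominator bound, the symmetrized kernel is then $\lesssim ||f_x||_{L^\infty}\omega(h)/|h|$ uniformly for $x\in\{\pm\xi/2\}$, and applying the triangle inequality to the difference at $x=\xi/2$ minus $x=-\xi/2$ yields the first target term $\int_0^\xi\omega(h)/h\,dh$.

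Summing the two contributions and restoring the $\omega'(\xi)$ prefactor gives the claimed estimate, with the constant absorbed into $A$ via $||f_x||_{L^\infty}\le ||f_0'||_{L^\infty}$ (Proposition \ref{p:maximumprinciple}). The only genuinely subtle step is the principal-value cancellation: neither crude bound on $|a-b|$ survives the $1/h$ singularity of the integrand, and the rescue comes from recognizing that the symmetrized kernel naturally contains the second-order increment $p+\tilde q$, which is directly controlled by the modulus $\omega$.
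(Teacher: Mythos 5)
Your proof is correct and follows essentially the same route as the paper: symmetrize $h \leftrightarrow -h$ on $|h|\le\xi$ to expose the second-order increment $f(x+h)+f(x-h)-2f(x)$ (bounded by $h\omega(h)$ via the modulus), and use the representation $\delta_hf(\xi/2)-\delta_hf(-\xi/2)=\int_0^\xi[f_x(h-\xi/2+s)-f_x(-\xi/2+s)]\,ds\le\xi\omega(|h|)$ on $|h|>\xi$, in both cases with the crude $h^4$ denominator bound. The observation that the inner region forces a principal-value/symmetrization argument, because no pointwise bound on $|\delta_hf(\xi/2)-\delta_hf(-\xi/2)|$ beats the $1/h$ singularity, is exactly the mechanism the paper uses.
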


\begin{proof}
We want to bound \eqref{e:firstdrifttermfinal} by symmetrizing the kernels for $|h|<\xi$, and and then using the continuity in the first variable for $|h|>\xi$.  To that end,
\begin{equation}\label{e:firstdriftterm}
\begin{split}
\omega'(\xi)\int\limits_\R &\left(\frac{-h}{\delta_hf(\xi/2)^2+h^2} - \frac{-h}{\delta_{h}f(-\xi/2)^2+h^2}\right)dh
\\&\leq \omega'(\xi)\int\limits_{0}^\xi h\bigg|\frac{\delta_hf(\xi/2)^2 - \delta_{-h}f(\xi/2)^2}{(\delta_hf(\xi/2)^2+h^2)(\delta_{-h}f(\xi/2)^2+h^2)} + \frac{\delta_hf(-\xi/2)^2 - \delta_{-h}f(-\xi/2)^2}{(\delta_hf(-\xi/2)^2+h^2)(\delta_{-h}f(-\xi/2)^2+h^2)} \bigg|dh
\\& \quad + \omega'(\xi)\int\limits_{|h|>\xi} |h|\bigg|\frac{\delta_hf(\xi/2)^2 - \delta_hf(-\xi/2)^2}{(\delta_hf(\xi/2)^2+h^2)(\delta_hf(-\xi/2)^2+h^2)} \bigg|dh .
\end{split}
\end{equation}

We bound the first integral using
\begin{equation}\label{e:fbound1}
\begin{split}
|\delta_h f(x)|\lesssim |h|,
\\ |\delta_h f(x)+\delta_{-h}f(x)| = \bigg|\int\limits_{0}^h f_x(x+s)-f_x(x+s-h) ds\bigg|\leq \omega(h)h,
\end{split}
\end{equation}
Thus get that for $0\leq h<\xi$,
\begin{equation}
\bigg|\frac{\delta_hf(x)^2-\delta_{-h}f(x)^2}{(\delta_hf(x)^2+h^2)(\delta_{-h}f(x)^2+h^2)}\bigg| \lesssim \frac{\omega(h)}{h^2},
\end{equation}
and hence
\begin{equation}\label{e:I1bound1}
\int\limits_{0}^\xi h\bigg|\frac{\delta_hf(\xi/2)^2 - \delta_{-h}f(\xi/2)^2}{(\delta_hf(\xi/2)^2+h^2)(\delta_{-h}f(\xi/2)^2+h^2)} dh \bigg|  \lesssim \int\limits_0^\xi \frac{\omega(h)}{h}dh.
\end{equation}
For $|h|\geq \xi$, we bound $|\delta_h f(\xi/2)+\delta_hf(-\xi/2)|\lesssim |h|$ and
\begin{equation}\label{e:fbound2}
\begin{split}
\bigg|\delta_h f(\xi/2) &- \delta_h f(-\xi/2) \bigg|=\bigg| \displaystyle\int\limits_0^h f_x(\xi/2+s)-f_x(-\xi/2+s)ds \bigg|
\\&= \bigg|\int\limits_{0}^{\xi}f_x(h-\xi/2+s) - f_x(-\xi/2+s)ds\bigg|\leq \xi\omega(|h|),
\end{split}
\end{equation}
in order to get
\begin{equation}\label{e:I1bound2}
\int\limits_{|h|>\xi} |h|\bigg|\frac{\delta_hf(\xi/2)^2-\delta_{h}f(-\xi/2)^2}{(\delta_hf(\xi/2)^2+h^2)(\delta_{h}f(-\xi/2)^2+h^2)}\bigg|dh \lesssim \xi\int\limits_{\xi}^\infty \frac{\omega(h)}{h^2}dh.
\end{equation}

Putting \eqref{e:I1bound1} and \eqref{e:I1bound2} together, we thus have
\begin{equation}
\omega'(\xi)\int\limits_\R \left(\frac{-h}{\delta_hf(\xi/2)^2+h^2} - \frac{-h}{\delta_{h}f(-\xi/2)^2+h^2}\right)dh \lesssim \omega'(\xi)\left(\int\limits_0^\xi \frac{\omega(h)}{h}dh + \xi\int\limits_\xi^\infty \frac{\omega(h)}{h^2}dh\right).
\end{equation}

\end{proof}

That leaves us with the second drift term of \eqref{e:driftanddiffusion},
\begin{lemma}\label{l:drifttwo}
Under the assumptions of Lemma \ref{l:omegabound}, for any $M\geq 1$
\begin{equation}
\omega'(\xi)\bigg|\int\limits_{-M\xi}^{M\xi} hK(\xi/2,h) - hK(-\xi/2,h)dh\bigg| \lesssim \omega'(\xi)\left(\int\limits_0^\xi \frac{\omega(h)}{h}dh + \xi\int\limits_\xi^\infty \frac{\omega(h)}{h^2}dh + \ln(M+1)\omega(\xi)\right).
\end{equation}
\end{lemma}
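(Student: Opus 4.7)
The plan is to mimic the proof of Lemma \ref{l:driftone}: decompose $K(x,h)$ into a principal piece handled by the same symmetrization and first-variable continuity tricks, plus a remainder handled via integration by parts. The key algebraic identity is
\[ k(x,s)=\partial_s\Big[-\tfrac{1}{\delta_sf(x)^2+s^2}\Big]-g(x,s),\qquad g(x,s):=\frac{2\,\delta_sf(x)\,\delta_sf_x(x)}{(\delta_sf(x)^2+s^2)^2}, \]
which when substituted into \eqref{e:Kequation} gives $K(x,h)=\tfrac{1}{\delta_hf(x)^2+h^2}+R(x,h)$, where $R$ is a signed tail integral of $g$ satisfying $\partial_hR=g$ and $h^2R(x,h)\to 0$ as $h\to 0$. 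The target integral then splits as $I_P+I_R$ accordingly.

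The piece $I_P$ is essentially Lemma \ref{l:driftone} restricted to $[-M\xi,M\xi]$, which only removes tails, so the same proof yields $|I_P|\lesssim \int_0^\xi \omega(h)/h\,dh+\xi\int_\xi^\infty \omega(h)/h^2\,dh$. For $I_R$, I would integrate by parts in $h$ with $v=h^2/2$ and $du=g\,dh$, producing the boundary term $\tfrac{(M\xi)^2}{2}\bigl[R(\xi/2,M\xi)-R(\xi/2,-M\xi)-R(-\xi/2,M\xi)+R(-\xi/2,-M\xi)\bigr]$ together with the bulk integral $-\int_{-M\xi}^{M\xi}\tfrac{h^2}{2}[g(\xi/2,h)-g(-\xi/2,h)]\,dh$. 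Both reduce to a single estimate on $|g(\xi/2,h)-g(-\xi/2,h)|$, which I would obtain by the standard three-piece algebraic expansion of the difference using $|\delta_hf(x)|\lesssim|h|$, $|\delta_hf_x(x)|\leq\omega(|h|)$, $|\delta_hf(\xi/2)-\delta_hf(-\xi/2)|\lesssim\xi\omega(|h|)$ from \eqref{e:fbound1}--\eqref{e:fbound2}, together with the crucial second-difference bound $|\delta_hf_x(\xi/2)-\delta_hf_x(-\xi/2)|\leq 2\omega(\xi)$. This yields $|g(\xi/2,h)-g(-\xi/2,h)|\lesssim\omega(\xi)/|h|^3$ for $|h|\geq\xi$, while the triangle inequality gives $\lesssim\omega(|h|)/|h|^3$ for $|h|<\xi$. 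Multiplying by $h^2/2$ and integrating then produces a bulk bound $\lesssim \int_0^\xi\omega(h)/h\,dh+\omega(\xi)\log(M+1)$, and applying the same estimate to the tail $|s|>M\xi$ absorbs the boundary contribution into $\omega(\xi)\log(M+1)$.

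The main obstacle is the uniform second-difference bound $|\delta_hf_x(\xi/2)-\delta_hf_x(-\xi/2)|\leq 2\omega(\xi)$. It holds because the LHS equals $\delta_\xi\delta_hf_x(-\xi/2)$, a second-order difference of $f_x$ taken at scale $\xi$; but it is essential, because without it the triangle inequality would replace $\omega(\xi)$ by $\omega(|h|)$ in the $|h|\geq\xi$ regime, producing a bound of $\omega(M\xi)\log M$ in place of $\omega(\xi)\log(M+1)$. The latter is precisely what will allow $M$ to be chosen large independently of $\xi$ in the large-$\xi$ regime of the subsequent application of Lemma \ref{l:omegabound}.
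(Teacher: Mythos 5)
Your proof is correct, and the route is genuinely different from the paper's. The paper works directly with $K$: it symmetrizes the integral to $\int_0^{M\xi} h\,\bigl|K(\xi/2,h)-K(\xi/2,-h)-K(-\xi/2,h)+K(-\xi/2,-h)\bigr|\,dh$, expands $K$ via its definition as a tail integral of $k$, and uses Fubini to reduce to the pointwise kernel bounds of \eqref{e:kbound1}--\eqref{e:kbound2}, namely $|k(x,s)+k(x,-s)|\lesssim\omega(s)/s^3$ for $s<\xi$ and $|k(\xi/2,s)-k(-\xi/2,s)|\lesssim \xi\omega(s)/s^4+\omega(\xi)/s^3$ for $s\geq\xi$. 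Your decomposition $k(x,s)=\partial_s\bigl[-(\delta_sf(x)^2+s^2)^{-1}\bigr]-g(x,s)$ is correct (the check is a one-line differentiation) and indeed yields $K(x,h)=(\delta_hf(x)^2+h^2)^{-1}+R(x,h)$ with $\partial_h R=g$ and $h^2R(x,h)\to 0$ as $h\to 0$; this is not in the paper and identifies $K$ as the already-analyzed kernel from Lemma \ref{l:driftone} plus a lower-order perturbation $R$, dispatched by integration by parts against $h^2/2$. The pointwise estimates on which the two routes rest have the same content --- both use \eqref{e:fbound1}--\eqref{e:fbound2} and the modulus hypothesis, and your second-difference bound $|\delta_hf_x(\xi/2)-\delta_hf_x(-\xi/2)|\leq 2\omega(\xi)$ plays exactly the role of the paper's $|f_x(\xi/2)-f_x(-\xi/2)|=\omega(\xi)$ inside \eqref{e:kbound2}: it is what keeps the coefficient of $\log M$ at size $\omega(\xi)$ rather than $\omega(M\xi)$, and your closing remark on why this is essential for the $\xi\geq\delta$ regime of Section 7 is accurate. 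What your route buys is a transparent structural picture (the lower-order nature of $R$ is manifest since its density $g$ carries a factor $\delta_sf_x$) and wholesale reuse of Lemma \ref{l:driftone}; what the paper's route buys is a shorter derivation with no auxiliary remainder $R$, at the cost of the somewhat heavier Fubini bookkeeping of \eqref{e:seconddrifttermsmall}--\eqref{e:seconddrifttermlarge}.
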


\begin{proof}
To begin, we note
\begin{equation}\label{e:seconddriftterm}
\omega'(\xi)\bigg|\int\limits_{-M\xi}^{M\xi} hK(\xi/2,h) - hK(-\xi/2,h)dh \bigg| \leq \omega'(\xi)\int\limits_0^{M\xi} h\bigg|K(\xi/2,h)-K(\xi/2, -h) - K(-\xi/2,h)+K(-\xi/2,-h)\bigg|dh.
\end{equation}

Recall the definition of $K$, \eqref{e:Kequation},
\begin{equation}
\begin{split}
K(x,h) = \left\{ \begin{array}{cl} \int\limits_h^\infty k(x,s)ds, & h>0 \\ \int\limits_{-\infty}^h -k(x,s) ds, & h<0 \end{array}\right. ,
\\ k(x,s) = \frac{2\left(\delta_sf(x)f_x(x)+s\right)}{\left(\delta_sf(x)^2 + s^2\right)^2}.
\end{split}
\end{equation}
So, to control \eqref{e:seconddriftterm} we first need to bound $|k(x,s)+k(x,-s)|$ for $0\leq s<\xi$, and $|k(\xi/2,s)-k(-\xi/2,s)|$ for $|s|>\xi$.
For the first, using the bounds \eqref{e:fbound1} we see that
\begin{equation}\label{e:kbound1}
\begin{split}
|k(x,s)+k(x,-s)| &= \bigg|\frac{2\left(\delta_sf(x)f_x(x)+s\right)}{\left(\delta_sf(x)^2 + s^2\right)^2} + \frac{2\left(\delta_{-s}f(x)f_x(x)-s\right)}{\left(\delta_{-s}f(x)^2 + s^2\right)^2}\bigg|
\\&\leq \frac{2|\delta_sf(x) + \delta_{-s}f(x)|\cdot |f_x(x)|}{\left(\delta_{-s}f(x)^2 + s^2\right)^2} + 2|\delta_sf(x)f_x(x)+s|\bigg|\frac{\left(\delta_sf(x)^2 + s^2\right)^2 - \left(\delta_{-s}f(x)^2 + s^2\right)^2}{\left(\delta_sf(x)^2 + s^2\right)^2\left(\delta_{-s}f(x)^2 + s^2\right)^2}\bigg|
\\&\lesssim \frac{\omega(s)}{s^3} + s\bigg|\frac{\delta_sf(x)^4-\delta_{-s}f(x)^4 + 2s^2(\delta_sf(x)^2-\delta_{-s}f(x)^2)}{s^8}\bigg|
\\&\lesssim \frac{\omega(s)}{s^3}.
\end{split}
\end{equation}

For the second, using \eqref{e:fbound1}, \eqref{e:fbound2}, and \eqref{e:omegaassumptions} we get that
\begin{equation}\label{e:kbound2}
\begin{split}
|k(\xi/2,s)-k(-\xi/2,s)| &= \bigg|\frac{2\left(\delta_sf(\xi/2)f_x(\xi/2)+s\right)}{\left(\delta_sf(\xi/2)^2 + s^2\right)^2} - \frac{2\left(\delta_{s}f(-\xi/2)f_x(-\xi/2)+s\right)}{\left(\delta_{s}f(-\xi/2)^2 + s^2\right)^2}\bigg|
\\&\leq 2\frac{|\delta_sf(\xi/2)f_x(\xi/2)-\delta_sf(-\xi/2)f_x(-\xi/2)|}{\left(\delta_{s}f(-\xi/2)^2 + s^2\right)^2}
\\&\quad + 2|\delta_sf(\xi/2)f_x(\xi/2)+s| \bigg|\frac{\left(\delta_sf(\xi/2)^2 + s^2\right)^2 - \left(\delta_{s}f(-\xi/2)^2 + s^2\right)^2}{\left(\delta_sf(\xi/2)^2 + s^2\right)^2\left(\delta_{s}f(-\xi/2)^2 + s^2\right)^2}\bigg|
\\&\lesssim  \frac{|\delta_sf(\xi/2)-\delta_sf(-\xi/2)|\cdot |f_x(\xi/2)|}{ s^4} + \frac{|\delta_sf(-\xi/2)|\cdot |f_x(\xi/2)-f_x(-\xi/2)|}{s^4}
\\&\quad + |s| \bigg|\frac{\delta_sf(\xi/2)^4-\delta_sf(-\xi/2)^4 + s^2\left(\delta_sf(\xi/2)^2-\delta_{s}f(-\xi/2)^2\right)}{s^8}\bigg|
\\&\lesssim \frac{\xi\omega(s)}{s^4} + \frac{\omega(\xi)}{s^3}.
\end{split}
\end{equation}

So using \eqref{e:kbound1} and \eqref{e:kbound2}, we can first bound
\begin{equation}\label{e:seconddrifttermsmall}
\begin{split}
\int\limits_0^\xi h\bigg|K(\xi/2,h)-K(\xi/2, -h) - K(-\xi/2,h) &+K(-\xi/2,-h)\bigg|dh
\\ &\lesssim \int\limits_0^\xi h\int\limits_h^\xi \frac{\omega(s)}{s^3} ds dh+ \int\limits_0^\xi h\int\limits_\xi^\infty \frac{\xi\omega(s)}{s^4} + \frac{\omega(\xi)}{s^3}ds dh
\\&\lesssim \int\limits_0^\xi \frac{\omega(s)}{s^3} \int\limits_0^s hdh ds+ \int\limits_{\xi}^\infty \frac{\xi^3\omega(s)}{s^4} +\frac{\xi^2\omega(\xi)}{s^3}ds
\\&\lesssim \int\limits_0^\xi \frac{\omega(s)}{s}ds + \xi\int\limits_\xi^\infty \frac{\omega(s)}{s^2}ds + \omega(\xi).
\end{split}
\end{equation}

For the rest of \eqref{e:seconddriftterm}, we use \eqref{e:kbound2} again to also bound
\begin{equation}
\begin{split}\label{e:seconddrifttermlarge}
\int\limits_{M\xi>|h|>\xi} |h| \ \bigg|K(\xi/2, h) - K(-\xi/2,h)\bigg| dh &\lesssim \int\limits_{\xi}^{M\xi} h\int\limits_h^\infty \frac{\omega(\xi)}{s^3} + \frac{\xi\omega(s)}{s^4} ds
\\&\lesssim \omega(\xi)\int\limits_{\xi}^{M\xi} \frac{1}{h} dh + \xi\int\limits_{\xi}^{M\xi} \frac{\omega(h)}{h^2}dh
\\&\lesssim \ln(M)\omega(\xi) + \xi\int\limits_{\xi}^\infty \frac{\omega(h)}{h^2}dh.
\end{split}
\end{equation}

\end{proof}

\section{Bounds on Diffusive Terms}

Now we move on to proving an upper bound for the diffusive terms of \eqref{e:driftanddiffusion}.  We can rewrite them as
\begin{equation}\label{e:diffusiveterms}
\begin{split}
\int\limits_{-M\xi}^{M\xi} (\delta_hf_x&(\xi/2)-h\omega'(\xi))K(\xi/2,h) - (\delta_hf_x(-\xi/2)-h\omega'(\xi))K(-\xi/2,h)dh
\\&\qquad\qquad\qquad+\int\limits_{|h|>M\xi} \delta_hf_x(\xi/2)K(\xi/2,h) -\delta_hf_x(-\xi/2)K(-\xi/2,h)dh
\\&=\int\limits_{-M\xi}^{M\xi} (\delta_hf_x(\xi/2)-h\omega'(\xi))K(\xi/2,h) - (\delta_hf_x(-\xi/2)-h\omega'(\xi))K(-\xi/2,h)dh
\\ &\quad+\int\limits_{|h|>M\xi} \left[\delta_hf_x(\xi/2) - \delta_h f_x(-\xi/2)\right]K(\xi/2, h)dh + \int\limits_{|h|>M\xi} \delta_hf_x(-\xi/2)\left[K(\xi/2, h)-K(-\xi/2,h)\right] dh.
\end{split}
\end{equation}

We begin by bounding the last term, which is an error term.
\begin{lemma}
Under the assumptions of Lemma \ref{l:omegabound},
\begin{equation}
\bigg|\int\limits_{|h|>M\xi} \delta_hf_x(-\xi/2)\left[K(\xi/2, h)-K(-\xi/2,h)\right]\bigg| dh \lesssim \omega(\xi)\int\limits_{M\xi}^\infty \frac{\omega(h)}{h^2}dh + \omega'(\xi)\xi\int\limits_\xi^\infty \frac{\omega(h)}{h^2}dh.
\end{equation}
\end{lemma}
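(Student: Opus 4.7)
The plan is to reduce the claim to two pointwise ingredients: the bound $|\delta_h f_x(-\xi/2)| \leq \omega(|h|)$ following directly from \eqref{e:omegaassumptions}, and a pointwise estimate of the form $|K(\xi/2,h) - K(-\xi/2,h)| \lesssim (\omega(\xi) + \omega'(\xi)\xi)/h^2$ for $|h| > M\xi$, which I extract by integrating the pointwise bound \eqref{e:kbound2} already established in the proof of Lemma \ref{l:drifttwo}.

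To derive the kernel-difference bound, for $h > 0$ I use the definition \eqref{e:Kequation} to write $K(\xi/2,h) - K(-\xi/2,h) = \int_h^\infty [k(\xi/2,s) - k(-\xi/2,s)] ds$, and analogously for $h < 0$. Since $|h| > M\xi \geq \xi$, the integration variable satisfies $|s| \geq |h| \geq \xi$, so \eqref{e:kbound2} applies throughout and yields
\begin{equation*}
|K(\xi/2,h) - K(-\xi/2,h)| \lesssim \frac{\omega(\xi)}{h^2} + \xi \int_{|h|}^\infty \frac{\omega(s)}{s^4} ds.
\end{equation*}
To handle the remaining tail integral, I exploit concavity of $\omega$, which is the natural setting for the tailor-made moduli used later in the paper: for $s \geq \xi$, $\omega(s) \leq \omega(\xi) + s\omega'(\xi)$, so termwise integration combined with $\xi \leq |h|$ gives
\begin{equation*}
\xi \int_{|h|}^\infty \frac{\omega(s)}{s^4} ds \leq \frac{\omega(\xi)\xi}{3h^3} + \frac{\omega'(\xi)\xi}{2h^2} \lesssim \frac{\omega(\xi) + \omega'(\xi)\xi}{h^2}.
\end{equation*}

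Multiplying the resulting pointwise estimate by $|\delta_h f_x(-\xi/2)| \leq \omega(|h|)$ and integrating over $|h| > M\xi$ gives
\begin{equation*}
\int_{|h|>M\xi} \omega(|h|) \cdot \frac{\omega(\xi) + \omega'(\xi)\xi}{h^2} dh \lesssim \omega(\xi)\int_{M\xi}^\infty \frac{\omega(h)}{h^2} dh + \omega'(\xi)\xi \int_\xi^\infty \frac{\omega(h)}{h^2} dh,
\end{equation*}
where the enlargement of the second integral from $(M\xi,\infty)$ to $(\xi,\infty)$ is harmless.

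The main obstacle is pulling out $\omega'(\xi)\xi$ rather than the coarser $\omega(\xi)$ as the prefactor on the second term. That step is where the concavity inequality $\omega(s) \leq \omega(\xi) + s\omega'(\xi)$ is essential; without it one only obtains $\omega(\xi)$ factors throughout, which would be too weak to close the modulus-propagation argument in the large-$\xi$ regime (where the parameter $M$ is taken large precisely so that the $\omega(\xi)\int_{M\xi}^\infty \omega(h)/h^2 dh$ term can be absorbed, but the $\omega'(\xi)\xi$ piece must remain with its own coefficient).
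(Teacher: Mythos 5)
Your argument is correct and is essentially the paper's proof: both multiply $|\delta_h f_x(-\xi/2)| \leq \omega(|h|)$ against the kernel-difference bound obtained by integrating \eqref{e:kbound2}, and both crucially invoke concavity of $\omega$ to replace $\omega(s)$ by $\omega(\xi)+\mathcal{O}(s)\omega'(\xi)$ in the tail and thereby produce the $\omega'(\xi)\xi$ coefficient. The only cosmetic difference is that you package the $s$-integration into a pointwise bound on $K(\xi/2,h)-K(-\xi/2,h)$ before integrating in $h$, whereas the paper works with the iterated integral directly; your version of the concavity bound $\omega(s)\leq\omega(\xi)+s\omega'(\xi)$ is marginally coarser than the paper's $\omega(s)\leq\omega(\xi)+(s-\xi)\omega'(\xi)$ but entirely sufficient.
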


\begin{proof}
Using the fact that $f_x$ has modulus $\omega$ and the bounds \ref{e:kbound2}, it follows that
\begin{equation}\label{e:diffusionerrorsmall}
\begin{split}
\int\limits_{|h|>M\xi} \delta_hf_x(-\xi/2)\left[K(\xi/2, h)-K(-\xi/2,h)\right] dh &\lesssim\int\limits_{M\xi}^\infty \omega(h)\int\limits_h^\infty \frac{\omega(\xi)}{s^3}+\frac{\xi\omega(s)}{s^4} ds dh
\\&\lesssim  \omega(\xi)\int\limits_{M\xi}^\infty \frac{\omega(h)}{h^2}dh + \int\limits_{M\xi}^\infty \omega(h)\int\limits_h^\infty \frac{\xi\omega(\xi)+\xi\omega'(\xi)(s-\xi)}{s^4}ds dh
\\&\lesssim  \omega(\xi)\int\limits_{M\xi}^\infty \frac{\omega(h)}{h^2}dh + \omega(\xi)\int\limits_{M\xi}^\infty \frac{\xi\omega(h)}{h^3}dh + \omega'(\xi)\xi\int\limits_{M\xi}^\infty \frac{\omega(h)}{h^2}dh
\\&\lesssim \omega(\xi)\int\limits_{M\xi}^\infty \frac{\omega(h)}{h^2}dh.+\omega'(\xi)\xi\int\limits_\xi^\infty \frac{\omega(h)}{h^2}dh.
\end{split}
\end{equation}

\end{proof}

%This bound is useful for small values of $\xi$, but for larger values of $\xi$ it doesn't tell us much since $\omega$ continues to grow.  In this case, we rely on our simpler ellipticity bound \eqref{e:Klowerbound} to immediately get that
%\begin{equation}\label{e:diffusionerrorlarge}
%\int\limits_{|h|>M\xi} \delta_hf_x(-\xi/2)\left[K(\xi/2, h)-K(-\xi/2,h)\right] dh\leq 2(\Lambda-\lambda)\int\limits_{M\xi}^\infty \frac{\omega(h)}{h^2}dh.
%\end{equation}

For the other two terms in \eqref{e:diffusiveterms}, we bound them in two stages.

\begin{lemma}
Under the assumptions of Lemma \ref{l:omegabound},
\begin{equation}\label{e:diffusiveestimate1}
\begin{split}
\int\limits_{-M\xi}^{M\xi}&(\delta_hf_x(\xi/2)-h\omega'(\xi))K(\xi/2,h) - (\delta_hf_x(-\xi/2)-h\omega'(\xi))K(-\xi/2,h)dh
\\&+\int\limits_{|h|>M\xi} \left[\delta_hf_x(\xi/2) - \delta_h f_x(-\xi/2)\right]K(\xi/2, h)dh
\\&\qquad\leq \lambda\int\limits_\R \frac{\delta_hf_x(\xi/2) - \delta_h f_x(-\xi/2)}{h^2}dh + 2(\Lambda-\lambda)\int\limits_{\xi}^{M\xi} \frac{(\omega(h-\xi) - \omega(\xi))_+}{h^2}dh
\\&\qquad\qquad\qquad+ \omega'(\xi)\int\limits_{\xi<|h|<M\xi} \bigg|h\left[K(\xi/2,h) - K(-\xi/2,h)\right]\bigg|dh.
\end{split}
\end{equation}
\end{lemma}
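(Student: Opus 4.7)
The strategy is to peel off the ``universal elliptic'' part $\lambda/h^2$ from each kernel $K(\pm\xi/2,h)$, which directly produces the main diffusive term on the right, and then bound the residual against the two error terms using the touching conditions together with the kernel--continuity estimate~\eqref{e:kbound2}. Write $K(\pm\xi/2,h) = \lambda/h^2 + \widetilde K_\pm(h)$ with $0 \leq \widetilde K_\pm \leq (\Lambda-\lambda)/h^2$ by~\eqref{e:Klowerbound}, and abbreviate $a(h) = \delta_hf_x(\xi/2) - h\omega'(\xi)$, $b(h) = \delta_hf_x(-\xi/2) - h\omega'(\xi)$, and $c(h) = a(h) - b(h)$. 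The sign observation that drives the whole argument is
\[
c(h) = f_x(\xi/2+h) - f_x(-\xi/2+h) - \omega(\xi) \leq 0, \qquad \forall h \in \R,
\]
which follows from the modulus hypothesis~\eqref{e:omegaassumptions} combined with the contact equality $f_x(\xi/2) - f_x(-\xi/2) = \omega(\xi)$.

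Algebraically, $aK_+ - bK_- = \lambda c/h^2 + a\widetilde K_+ - b\widetilde K_-$ on $|h|<M\xi$, and $cK_+ = \lambda c/h^2 + c\widetilde K_+$ on $|h|>M\xi$, with the nonpositive $c\widetilde K_+$ discarded. Thus the left hand side is bounded by $\lambda \int_\R c/h^2\,dh$ plus the residual $R := \int_{-M\xi}^{M\xi}(a\widetilde K_+ - b\widetilde K_-)\,dh$, which reproduces the first RHS term; it remains to bound $R$ by the two error terms.

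On the outer annulus $\xi<|h|<M\xi$ I would use the asymmetric identity $a\widetilde K_+ - b\widetilde K_- = c\widetilde K_- + a(K_+-K_-)$, discard $c\widetilde K_- \leq 0$, and treat $a(K_+-K_-)$ together with its mirror $b(K_+-K_-)$. The drift pieces $-h\omega'(\xi)$ of $a$ and $b$ contribute, in absolute value, the third RHS error $\omega'(\xi)\int_{\xi<|h|<M\xi}|h(K_+-K_-)|\,dh$. The ``touching pieces'' are controlled by the two inequalities $\delta_hf_x(\xi/2) \leq \omega(|\xi+h|)-\omega(\xi)$ (touching from above at $\xi/2$, which for $h<-\xi$ reads $\omega(|h|-\xi)-\omega(\xi)$) and $\delta_hf_x(-\xi/2) \geq \omega(\xi)-\omega(h-\xi)$ (touching from below at $-\xi/2$, used on $h>\xi$); paired with $|K_+-K_-| \leq (\Lambda-\lambda)/h^2$ they produce the second RHS error $2(\Lambda-\lambda)\int_\xi^{M\xi}(\omega(h-\xi)-\omega(\xi))_+/h^2\,dh$, with the factor of $2$ tracking the symmetric contributions from $h>\xi$ and $h<-\xi$.

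The main obstacle is the central region $|h|<\xi$, where neither touching inequality is sharp enough on its own to dominate the $1/h^2$ singularity of $\widetilde K_\pm$. Here I switch to the symmetric decomposition
\[
a\widetilde K_+ - b\widetilde K_- = \tfrac{a-b}{2}(\widetilde K_+ + \widetilde K_-) + \tfrac{a+b}{2}(\widetilde K_+ - \widetilde K_-).
\]
The first summand is pointwise nonpositive (since $(a-b)/2 = c/2 \leq 0$ and $\widetilde K_\pm \geq 0$) and is discarded, while the tangency $f_{xx}(\pm\xi/2) = \omega'(\xi)$ forces $(a+b)/2 = O(h^2)$ near the origin. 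Hence $\int_{|h|<\xi}\tfrac{a+b}{2}(K_+-K_-)\,dh$ is integrable and can be absorbed into the two already-produced error terms, using the finer kernel--continuity bound \eqref{e:kbound2} (which gives $|K_+-K_-| \lesssim \xi\omega(h)/h^3 + \omega(\xi)/h^2$) in place of the crude $(\Lambda-\lambda)/h^2$ estimate on the central region.
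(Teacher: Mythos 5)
Your treatment of the outer region $\xi<|h|<M\xi$ and the far region $|h|>M\xi$ is essentially the paper's argument: write $G = c\widetilde K_{\mp} + (\text{factor})(K_+ - K_-)$, choose the decomposition so that the discarded piece has the right sign, and bound the remaining touching/drift pieces by the second and third error terms using the kernel comparison and the one-sided touching inequalities. The paper writes out this sign casework explicitly; you compress it into ``treat $a(K_+-K_-)$ together with its mirror $b(K_+-K_-)$,'' which is a bit terse but encodes the same idea.

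The genuine gap is the central region $|h|<\xi$, which you have exactly backwards: it is not ``the main obstacle'' but the \emph{easy} region, and your argument there does not close. On $|h|<\xi$ \emph{both} touching inequalities \eqref{e:touchfromabove2} are in force: $a(h) = \delta_hf_x(\xi/2) - h\omega'(\xi) \leq 0$ (valid for all $h\geq -\xi$) and $b(h) = \delta_hf_x(-\xi/2) - h\omega'(\xi) \geq 0$ (valid for all $h\leq \xi$). Hence $a\widetilde K_+ \leq 0$ and $-b\widetilde K_- \leq 0$ separately, so the residual $a\widetilde K_+ - b\widetilde K_-$ is nonpositive on $|h|<\xi$ and no error term is needed at all; equivalently, $G(\xi,h) = aK_+ - bK_- \leq \lambda a/h^2 - \lambda b/h^2 = \lambda c/h^2$. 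Your symmetric decomposition $\tfrac{c}{2}(\widetilde K_++\widetilde K_-) + \tfrac{a+b}{2}(K_+-K_-)$ throws this away: the second summand has no sign, the claimed bound $(a+b)/2 = O(h^2)$ is a Taylor estimate whose constant depends on $f_{xxx}$ (not on $\beta(f_0'),\|f_0'\|_{L^\infty}$, which is what the lemma must deliver), and the resulting integral $\int_{|h|<\xi}\tfrac{a+b}{2}(K_+-K_-)\,dh$ is a new quantity that cannot be ``absorbed'' into the two stated error terms, since those are integrals over $\xi<|h|<M\xi$ only. As written, the proof would produce an extra uncontrolled error that then spoils the computation in Lemma \ref{l:omegainequality}. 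Replace the central-region argument by the simple observation that $a\leq 0\leq b$ there.
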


\begin{proof}

We can bound the second term of \eqref{e:diffusiveestimate1} rather easily.  Since
\begin{equation}
\delta_hf_x(\xi/2) - \delta_h f_x(-\xi/2) = (f_x(h+\xi/2)-f_x(h-\xi/2)) - \omega(\xi)\leq 0,
\end{equation}
by the uniform ellipticity of $K$,
\begin{equation}
\int\limits_{|h|>M\xi} \left[\delta_hf_x(\xi/2) - \delta_h f_x(-\xi/2)\right]K(\xi/2, h) dh \leq \lambda\int\limits_{|h|>M\xi}\frac{\delta_hf_x(\xi/2) - \delta_h f_x(-\xi/2)}{h^2}dh.
\end{equation}

To bound the first term, we first define
\begin{equation}
G(\xi, h) = (\delta_hf_x(\xi/2)-h\omega'(\xi))K(\xi/2,h) - (\delta_hf_x(-\xi/2)-h\omega'(\xi))K(-\xi/2,h).
\end{equation}
Note that since $\omega$ is concave and touches $f_x$ from above (see \eqref{e:touchfromabove}), it follows that
\begin{equation}
\begin{split}\label{e:touchfromabove2}
\delta_h f_x(\xi/2) - \omega'(\xi)h \leq \delta_h\omega(\xi)-\omega'(\xi)h \leq 0,  \quad h\geq-\xi
\\ \delta_h f_x(-\xi/2) - \omega'(\xi)h \geq -\delta_{-h}\omega(\xi)-h\omega'(\xi)\geq 0, \quad h\leq \xi
\end{split}
\end{equation}

Thus for $|h|\leq\xi$, by the uniform ellipticity of $K$ we have the bound
\begin{equation}
G(\xi,h)\leq \lambda \frac{\delta_hf_x(\xi/2)-\delta_hf_x(-\xi/2)}{h^2}.
\end{equation}

That just leaves us with the case  $\xi\leq |h| \leq M\xi$ to analyze.  Note that we can write $G$ in two distinct ways:
\begin{equation}
\begin{split}
G(\xi,h) &= (\delta_hf_x(\xi/2)-\delta_hf_x(-\xi/2)) K(\xi/2,h) + ( \delta_hf_x(-\xi/2)-h\omega'(\xi))(K(\xi/2,h)-K(-\xi/2,h))
\\&=(\delta_hf_x(\xi/2)-\delta_hf_x(-\xi/2)) K(-\xi/2,h) + ( \delta_hf_x(\xi/2)-h\omega'(\xi))(K(\xi/2,h)-K(-\xi/2,h)).
\end{split}
\end{equation}
By \eqref{e:touchfromabove2}, $\delta_hf_x(\xi/2)-h\omega'(\xi)\leq 0$ for all $h>\xi$.  Thus if $K(\xi/2,h)-K(-\xi/2,h)\geq 0$, then
\begin{equation}
G(\xi,h)\leq \lambda \frac{\delta_hf_x(\xi/2)-\delta_hf_x(-\xi/2)}{h^2}, \quad \quad \mbox{ if }K(\xi/2,h)-K(-\xi/2,h)\geq 0
\end{equation}
On the other hand, since
\begin{equation}
\delta_hf_x(-\xi/2) = \delta_{h-\xi}f(\xi/2) + \omega(\xi) \geq  - \omega(h-\xi) + \omega(\xi)
\end{equation}
for $h\geq\xi$, we see that
\begin{equation}
\begin{split}
G(\xi,h) &\leq \lambda \frac{\delta_hf_x(\xi/2)-\delta_hf_x(-\xi/2)}{h^2} + ( \delta_hf_x(-\xi/2)-h\omega'(\xi))(K(\xi/2,h)-K(-\xi/2,h))
\\& \leq \lambda \frac{\delta_hf_x(\xi/2)-\delta_hf_x(-\xi/2)}{h^2} +(\Lambda-\lambda)\frac{(\omega(h-\xi)-\omega(\xi))_+}{h^2} +h\omega'(\xi)|K(\xi/2,h)-K(-\xi/2,h)|,
\\ & \mbox{ if }K(\xi/2,h)-K(-\xi/2,h)\leq 0.
\end{split}
\end{equation}

Putting these two together, we get that
\begin{equation}
G(\xi,h) \leq \lambda \frac{\delta_hf_x(\xi/2)-\delta_hf_x(-\xi/2)}{h^2} +(\Lambda-\lambda)\frac{(\omega(h-\xi)-\omega(\xi))_+}{h^2} +h\omega'(\xi)|K(\xi/2,h)-K(-\xi/2,h)|.
\end{equation}
for $h\geq\xi$.  A similar argument can be made in the case that $h\leq -\xi$.

Putting this all together,
\begin{equation}
\begin{split}
\int\limits_{-M\xi}^{M\xi} G(\xi,h)dh &+\int\limits_{|h|>M\xi} \left[\delta_hf_x(\xi/2) - \delta_h f_x(-\xi/2)\right]K(\xi/2, h)dh
\\&\leq \lambda\int\limits_\R \frac{\delta_hf_x(\xi/2) - \delta_h f_x(-\xi/2)}{h^2}dh + 2(\Lambda-\lambda)\int\limits_{\xi}^{M\xi} \frac{(\omega(h-\xi) - \omega(\xi))_+}{h^2}dh
\\&\qquad\qquad+ \omega'(\xi)\int\limits_{\xi<|h|<M\xi} \bigg| h\left[K(\xi/2,h) - K(-\xi/2,h)\right]\bigg|dh.
\end{split}
\end{equation}

\end{proof}

It's clear that we can bound $\displaystyle\int\limits_{\xi<|h|<M\xi} \bigg|h\left[K(\xi/2,h) - K(-\xi/2,h)\right]\bigg|dh$ as in \eqref{e:seconddrifttermlarge}.  Thus the only thing remaining to prove \eqref{e:omegabound} is
\begin{lemma}
Under the assumptions of Lemma \ref{l:omegabound},
\begin{equation}
\lambda\int\limits_{\R}\frac{\delta_hf_x(\xi/2) - \delta_hf_x(-\xi/2)}{h^2}dh\leq 2\lambda\int\limits_0^\xi \frac{\delta_h\omega(\xi) + \delta_{-h}\omega(\xi)}{h^2}dh + 2\lambda \int\limits_{\xi}^\infty \frac{\omega(\xi+h)-\omega(h)-\omega(\xi)}{h^2}dh.
\end{equation}
\end{lemma}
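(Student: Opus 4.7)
The plan is to split the integration $\int_{\R}$ into the two regions $|h|\le\xi$ and $|h|>\xi$, handling each separately.

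For the near-diagonal region $|h|\le\xi$, both touching inequalities from \eqref{e:touchfromabove2} are valid simultaneously: $\delta_hf_x(\xi/2)\le\delta_h\omega(\xi)$ (since $h\ge-\xi$) and $\delta_hf_x(-\xi/2)\ge-\delta_{-h}\omega(\xi)$ (since $h\le\xi$). Subtracting these gives the pointwise bound
\[
\delta_hf_x(\xi/2)-\delta_hf_x(-\xi/2)\le\delta_h\omega(\xi)+\delta_{-h}\omega(\xi),
\]
whose right-hand side is invariant under $h\mapsto -h$. Integrating against $h^{-2}\,dh$ over $(-\xi,\xi)$ then yields exactly $2\int_0^\xi\frac{\delta_h\omega(\xi)+\delta_{-h}\omega(\xi)}{h^2}\,dh$ by this symmetry, matching the first term of the claim.

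For the tail $|h|>\xi$, I first symmetrize in $h\mapsto -h$ so that the integral becomes $\int_\xi^\infty[A(h)+A(-h)]/h^2\,dh$, where $A(h):=\delta_hf_x(\xi/2)-\delta_hf_x(-\xi/2)$. The identity $f_x(\xi/2)-f_x(-\xi/2)=\omega(\xi)$ rewrites $A(h)=f_x(\xi/2+h)-f_x(-\xi/2+h)-\omega(\xi)$. I then bound $f_x(\xi/2+h)$ above by the touching from above at $\xi/2$ (still valid, since the step is $\ge-\xi$) and bound $f_x(-\xi/2-h)$ below by the touching from below at $-\xi/2$ (still valid, since the step is $\le\xi$). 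For the remaining two values $f_x(-\xi/2+h)$ and $f_x(\xi/2-h)$ I use the general modulus, measured relative to the \emph{opposite} touching endpoint in each case, so that the identity $f_x(\xi/2)-f_x(-\xi/2)=\omega(\xi)$ reappears and produces cancellation. Summing the resulting bounds on $A(h)$ and $A(-h)$, and invoking subadditivity and concavity of $\omega$ to simplify the surviving increments, produces the claimed tail integrand $2[\omega(\xi+h)-\omega(h)-\omega(\xi)]/h^2$.

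The main obstacle is obtaining the precise tail estimate. A direct pointwise combination of the touching at $\xi/2$ with the modulus measured at $\xi/2$ only yields the weaker bound $A(h)+A(-h)\le 2[\omega(\xi+h)+\omega(h-\xi)-2\omega(\xi)]$, which is strictly larger than the target $2[\omega(\xi+h)-\omega(h)-\omega(\xi)]$. The sharpening requires the cross-pairing of touching at one point with the modulus based at the opposite point described above, so that the telescoping identity $f_x(\xi/2)-f_x(-\xi/2)=\omega(\xi)$ is used twice and the $\omega(h-\xi)$ term is eliminated in favor of $-\omega(h)$ via concavity. This is the most delicate step in the argument.
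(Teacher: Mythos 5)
Your handling of the near-diagonal region $|h|\le\xi$ is correct: both touching inequalities in \eqref{e:touchfromabove2} apply there, and the pointwise bound
$\delta_hf_x(\xi/2)-\delta_hf_x(-\xi/2)\le\omega(\xi+h)+\omega(\xi-h)-2\omega(\xi)$
integrated over $(-\xi,\xi)$ does give the first term on the right-hand side.

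The tail, however, cannot be handled the way you propose, because the pointwise inequality you are trying to establish,
\begin{equation*}
A(h)+A(-h)\le 2\bigl[\omega(\xi+h)-\omega(h)-\omega(\xi)\bigr],\qquad h>\xi,
\end{equation*}
does \emph{not} follow from the modulus and touching hypotheses, and is in fact false. Carrying out the cross-pairing you describe literally gives
\begin{equation*}
a-d\le 2\omega(\xi+h)-\omega(\xi),\qquad c-b\le 2\omega(h-\xi)-\omega(\xi),
\end{equation*}
so that $A(h)+A(-h)\le 2\bigl[\omega(\xi+h)+\omega(h-\xi)-2\omega(\xi)\bigr]$, which is exactly the ``weaker bound'' you mention — the cross-pairing does not improve it. The passage from $\omega(h-\xi)$ to $-\omega(h)$ that you attribute to concavity runs in the wrong direction: concavity together with $\omega(0)=0$ gives subadditivity, hence $\omega(h)\le\omega(h-\xi)+\omega(\xi)$, i.e.\ $\omega(h-\xi)\ge\omega(h)-\omega(\xi)$. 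Your target would require instead $\omega(h-\xi)+\omega(h)\le\omega(\xi)$, which fails whenever $h>\xi$.

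A concrete counterexample to the pointwise tail bound: take $\omega(t)=\min\{t,1\}$, $\xi=1/2$, and let $f_x$ be the piecewise-linear function that equals $0$ for $y\le -1/4$, equals $y+1/4$ on $[-1/4,1/2]$, equals $3/4$ on $[1/2,1]$, equals $y-1/4$ on $[1,5/4]$, and equals $1$ for $y\ge 5/4$. This $f_x$ is $1$-Lipschitz with oscillation $1$, hence has modulus $\omega$; it satisfies $f_x(\xi/2)-f_x(-\xi/2)=\omega(\xi)$; and one checks directly that it is touched from above at $\xi/2$ and from below at $-\xi/2$ by the translates of $\omega$, as in \eqref{e:fybounds}. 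With $h=11/10$ one finds $A(h)=1-\tfrac34-\tfrac12=-\tfrac14$ and $A(-h)=0-0-\tfrac12=-\tfrac12$, so $A(h)+A(-h)=-\tfrac34$, while $2\bigl[\omega(\xi+h)-\omega(h)-\omega(\xi)\bigr]=2(1-1-\tfrac12)=-1$. The pointwise bound is violated (and even the one-sided version $A(h)\le\omega(\xi+h)-\omega(h)-\omega(\xi)$ already fails: $-\tfrac14>-\tfrac12$).

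The stated inequality is an \emph{integral} inequality and holds only after cancellation between the tail and the near-diagonal contributions. The paper obtains it by writing $\int_\epsilon^\infty\delta_hf_x(\xi/2)h^{-2}\,dh-\int_\epsilon^\infty\delta_hf_x(-\xi/2)h^{-2}\,dh$ as a single integral against $f_x(y)$ with the signed kernel $(y-\xi/2)^{-2}-(y+\xi/2)^{-2}$, applying the upper envelope $f_x(y)\le f_x(-\xi/2)+\omega(y+\xi/2)$ on $\{y>\xi/2\}$ where this kernel is positive, and tracking the boundary terms produced by the shift of variables; the term $-\omega(h)$ in the tail integrand arises from this change of variables and the boundary accounting, not from any pointwise estimate on the integrand. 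You would need to reproduce some version of that global rearrangement to obtain the claimed form.
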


\begin{proof}

%Let us start by attempting to show that $$\displaystyle\int\limits_\xi^\infty \frac{\delta_hf_x(\xi/2) - \delta_hf_x(-\xi/2)}{h^2}dh \leq \int\limits_{\xi}^\infty \frac{\omega(\xi+h)-\omega(h)-\omega(\xi)}{h^2}dh.$$
%This bound is a bit tricky, because the simplest bound only gives $\delta_hf_x(\xi/2) - \delta_hf_x(-\xi/2) = (f(\xi/2+h)-f(h-\xi/2))-\omega(\xi)\leq \omega(\xi)-\omega(\xi)=0$.
%

To see this, note that formally we should have
\begin{equation}\label{e:formalintegral}
\begin{split}
\int\limits_{\R}\frac{\delta_hf_x(\xi/2) - \delta_hf_x(-\xi/2)}{h^2}dh = \int\limits_{\R} f_x(y)\left(\frac{1}{(y-\xi/2)^2} - \frac{1}{(y+\xi/2)^2}\right) - \frac{\omega(\xi)}{y^2} dy.
\end{split}
\end{equation}
Thus in order to get an upper bound on \eqref{e:formalintegral}, we should be taking an upper bound on $f_x(y)$ when $y>0$ and a lower bound when $y<0$.  Note by \eqref{e:touchfromabove} that
\begin{equation}\label{e:fybounds}
\begin{split}
f_x(y)\leq f_x(\xi/2) + \omega(y+\xi/2) - \omega(\xi) = f_x(-\xi/2)+\omega(y+\xi/2), \quad y>-\xi/2,
\\ f_x(y)\geq f_x(-\xi/2) -\omega(-y+\xi/2) + \omega(\xi) = f_x(\xi/2)-\omega(-y+\xi/2), \quad y<\xi/2.
\end{split}
\end{equation}
In particular, using the upper bounds bounds on $\delta_hf_x(\pm \xi/2)$ for $h>0$ and the lower bounds for $\delta_hf_x(\pm \xi/2)$ for $h<0$ give the result.  To rigorously justify this though, we will bound $$\displaystyle\int\limits_\epsilon^\infty \frac{\delta_hf_x(\xi/2) - \delta_hf_x(-\xi/2)}{h^2}dh$$ from above.  Taking $\epsilon\to 0$, we'll get 
\begin{equation}
\int\limits_0^\infty \frac{\delta_hf_x(\xi/2) - \delta_hf_x(-\xi/2)}{h^2}dh\leq \int\limits_0^\xi \frac{\delta_h\omega(\xi) + \delta_{-h}\omega(\xi)}{h^2}dh +  \int\limits_{\xi}^\infty \frac{\omega(\xi+h)-\omega(h)-\omega(\xi)}{h^2}dh.
\end{equation}
The bound for $\int\limits_{-\infty}^0$ follows from identical arguments.  

So, fix some $\epsilon<<\xi$.  By splitting the integral into a several pieces and reparameterizing, we get that 
\begin{equation}\label{e:firsthotmess}
\begin{split}
\int\limits_\epsilon^\infty \frac{\delta_hf_x(\xi/2) - \delta_hf_x(-\xi/2)}{h^2}dh = \int\limits_{\epsilon+\xi/2}^\infty \frac{f_x(y)}{(y-\xi/2)^2} dy - \int\limits_{\epsilon - \xi/2}^\infty \frac{f_x(y)}{(y+\xi/2)^2} dy - \int\limits_{\epsilon}^\infty \frac{\omega(\xi)}{y^2}dy
\\= \int\limits_{\epsilon+\xi/2}^\infty f_x(y)\left(\frac{1}{(y-\xi/2)^2} -\frac{1}{(y+\xi/2)^2}\right)dy - \int\limits_{\epsilon}^\infty \frac{\omega(\xi)}{y^2}dy - \int\limits_{\epsilon - \xi/2}^{\epsilon+\xi/2} \frac{f_x(y)}{(y+\xi/2)^2} dy .
\end{split}
\end{equation}
In the first integral of the second line, since $y>\xi/2$ we have that $(y-\xi/2)^{-2}>(y+\xi/2)^{-2}$.  So applying the upper bound in \eqref{e:fybounds} gives an upper bound on the integral,
\begin{equation}\label{e:secondhotmess}
\begin{split}
\int\limits_{\epsilon+\xi/2}^\infty f_x(y)\left(\frac{1}{(y-\xi/2)^2} -\frac{1}{(y+\xi/2)^2}\right)dy \leq \int\limits_{\epsilon+\xi/2}^\infty \left(f_x(\xi/2) + \omega(y+\xi/2) - \omega(\xi)\right)\left(\frac{1}{(y-\xi/2)^2} -\frac{1}{(y+\xi/2)^2}\right)dy 
\\= \int\limits_{\epsilon+\xi/2}^\infty \frac{f_x(\xi/2) + \omega(y+\xi/2) - \omega(\xi)}{(y-\xi/2)^2}dy -\int\limits_{\epsilon + \xi/2}^{\infty}\frac{f_x(\xi/2) + \omega(y+\xi/2) - \omega(\xi)}{(y+\xi/2)^2}dy 
\end{split}
\end{equation}
By reparametrizing back, we get that 
\begin{equation}\label{e:thirdhotmess}
\begin{split}
\int\limits_{\epsilon+3\xi/2}^\infty \frac{f_x(\xi/2) + \omega(y+\xi/2) - \omega(\xi)}{(y-\xi/2)^2}dy -\int\limits_{\epsilon + \xi/2}^{\infty}\frac{f_x(\xi/2) + \omega(y+\xi/2) - \omega(\xi)}{(y+\xi/2)^2}dy -\int\limits_{\epsilon+\xi}^\infty \frac{\omega(\xi)}{y^2}dy 
\\= \int\limits_{\epsilon+\xi}^\infty \frac{\omega(\xi+h)-\omega(h)-\omega(\xi)}{h^2}dh
\end{split}
\end{equation}

Hence combining \eqref{e:firsthotmess},\eqref{e:secondhotmess}, and \eqref{e:thirdhotmess}  gives us 
\begin{equation}
\begin{split}
\int\limits_\epsilon^\infty \frac{\delta_hf_x(\xi/2) - \delta_hf_x(-\xi/2)}{h^2}dh&\leq \int\limits_{\epsilon+\xi}^\infty \frac{\omega(\xi+h)-\omega(h)-\omega(\xi)}{h^2}dh + \int\limits_\epsilon^{\epsilon+\xi} \frac{f_x(\xi/2) + \omega(\xi+h) - \omega(\xi)}{h^2}dh 
\\&\quad- \int\limits_\epsilon^{\epsilon+\xi} \frac{\omega(\xi)}{h^2} dh  - \int\limits_{\epsilon}^{\epsilon+\xi} \frac{f_x(h-\xi/2)}{h^2}dh
\\& =  \int\limits_{\epsilon+\xi}^\infty \frac{\omega(\xi+h)-\omega(h)-\omega(\xi)}{h^2}dh 
\\&\quad+\int\limits_{\epsilon}^{\epsilon+\xi} \frac{\delta_h \omega(\xi) + f_x(\xi/2) - f_x(h-\xi/2)-\omega(\xi)}{h^2}dh.
\end{split}
\end{equation}
Now for $h<\xi$, we have that $f_x(\xi/2) - f_x(h-\xi/2)\leq \omega(\xi-h)$, and thus 
\begin{equation}
\int\limits_{\epsilon}^{\xi} \frac{\delta_h \omega(\xi) + f_x(\xi/2) - f_x(h-\xi/2)-\omega(\xi)}{h^2}dh\leq \int\limits_\epsilon^\xi \frac{\delta_h\omega(\xi) + \delta_{-h}\omega(\xi)}{h^2}dh.  
\end{equation}
Taking the limit as $\epsilon\to 0$, we then get
\begin{equation}
\int\limits_0^\infty \frac{\delta_hf_x(\xi/2) - \delta_hf_x(-\xi/2)}{h^2}dh\leq \int\limits_0^\xi \frac{\delta_h\omega(\xi) + \delta_{-h}\omega(\xi)}{h^2}dh +  \int\limits_{\xi}^\infty \frac{\omega(\xi+h)-\omega(h)-\omega(\xi)}{h^2}dh.
\end{equation}

\end{proof}

\section{Modulus Inequality}
Combining all the estimates from the previous two sections, we get a proof of Lemma \ref{l:omegabound}.  Thus under the assumptions \eqref{e:omegaassumptions}, we have  that
\begin{equation}
\begin{split}
\frac{d}{dt}(f_x(\xi/2)-f_x(-\xi/2)) \leq & A\omega'(\xi)\left(\int\limits_0^\xi \frac{\omega(h)}{h}dh + \xi\int\limits_{\xi}^\infty \frac{\omega(h)}{h^2} dh  + \ln(M+1)\omega(\xi)\right)
\\&+A\omega(\xi)\int\limits_{M\xi}^\infty \frac{\omega(h)}{h^2}dh +2(\Lambda-\lambda)\int\limits_{\xi}^{M\xi} \frac{(\omega(h-\xi) - \omega(\xi))_+}{h^2}dh
\\&+2\lambda\int\limits_0^\xi \frac{\delta_h\omega(\xi) + \delta_{-h}\omega(\xi)}{h^2}dh + 2\lambda\int\limits_{\xi}^\infty \frac{\omega(h+\xi)-\omega(h)-\omega(\xi)}{h^2}dh,
\end{split}
\end{equation}
for any $M\geq 1$, where $A$ is a constant depending only on $||f_0'||_{L^\infty}$.

In \cite{Kiselev}, the authors showed that the modulus
\begin{equation}\label{e:omegadefn}
\left\{\begin{array}{cl} \omega(\xi) = \xi-\xi^{3/2}, & 0\leq \xi \leq \delta \\ \omega'(\xi) = \displaystyle\frac{\gamma}{\xi(4+\log(\xi/\delta))}, & \xi\geq \delta \end{array}\right. ,
\end{equation}
satisfies
\begin{equation}\label{e:Kiselevcalc}
A\omega'(\xi)\left(\int\limits_0^\xi \frac{\omega(h)}{h}dh + \xi\int\limits_{\xi}^\infty \frac{\omega(h)}{h^2} dh \right) +\lambda\int\limits_0^\xi \frac{\delta_h\omega(\xi) + \delta_{-h}\omega(\xi)}{h^2}dh + \lambda\int\limits_{\xi}^\infty \frac{\omega(h+\xi)-\omega(h)-\omega(\xi)}{h^2}dh < 0,
\end{equation}
for all $\xi\in \R$ so long as $\delta, \gamma$ are sufficiently small.

With that in mind, we will show that
\begin{lemma}\label{l:omegainequality}
Under the assumptions of Lemma \ref{l:omegabound} for the modulus $\omega$ defined in \eqref{e:omegadefn},
\begin{equation}
\frac{d}{dt}(f_x(\xi/2)-f_x(-\xi/2)) < -\omega'(\xi)\omega(\xi),
\end{equation}
as long as $\delta, \gamma$ are taken sufficiently small depending on $\beta(f_0'),||f_0'||_{L^\infty}$.
\end{lemma}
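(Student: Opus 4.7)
The plan is to substitute the modulus \eqref{e:omegadefn} into the bound of Lemma \ref{l:omegabound} and exploit the fact that its diffusion coefficient is $2\lambda$. Splitting $2\lambda=\lambda+\lambda$, the right-hand side of \eqref{e:omegabound} becomes the Kiselev combination \eqref{e:Kiselevcalc} (strictly negative) plus a spare $\lambda$ times the (negative) diffusion integrals plus three explicit error terms: the $\ln(M+1)$ drift, the tail $A\omega(\xi)\int_{M\xi}^\infty\omega(h)/h^2\,dh$, and the kernel-discrepancy integral $2(\Lambda-\lambda)\int_\xi^{M\xi}(\omega(h-\xi)-\omega(\xi))_+/h^2\,dh$. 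The task is to show that, for $M$ chosen adaptively in $\xi$ and $\delta,\gamma$ taken sufficiently small, the spare $\lambda$ copy of the diffusion beats all three error terms with enough room to spare for the target margin $-\omega'(\xi)\omega(\xi)$.

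For $0<\xi\leq\delta$ I take $M=1$: then the kernel-discrepancy integral vanishes (its domain is empty) and $\ln(M+1)$ collapses to $\ln 2$. On this interval $\omega(\xi)=\xi-\xi^{3/2}$ with $\omega''(\xi)\asymp-\xi^{-1/2}$, so the small-scale diffusion $\int_0^\xi(\delta_h\omega+\delta_{-h}\omega)(\xi)h^{-2}dh$ is of order $-\xi^{1/2}$, giving a spare margin of order $\lambda\xi^{1/2}$. Every remaining error term, together with the target $\omega'(\xi)\omega(\xi)$, is $O(\xi)$. Choosing $\delta$ small depending on $\lambda$ and $||f_0'||_{L^\infty}$ forces $\lambda\xi^{1/2}$ to dominate, closing the small-$\xi$ case.

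For $\xi\geq\delta$ I take $M=M_0$ a large constant to be determined. First I restrict to $\omega(\xi)\leq 2||f_0'||_{L^\infty}$; for larger $\xi$ the hypothesis $f_x(T,\xi/2)-f_x(T,-\xi/2)=\omega(\xi)$ contradicts the maximum principle bound $||f_x||_{L^\infty}\leq||f_0'||_{L^\infty}$, so the lemma is vacuous. On this restricted range $\omega(\xi)$ is uniformly bounded. The kernel-discrepancy error is controlled using concavity of $\omega$: the integrand vanishes on $[\xi,2\xi]$ since $\omega(h-\xi)\leq\omega(\xi)$ there, and on $[2\xi,M_0\xi]$ I use $\omega(h-\xi)-\omega(\xi)\leq\omega'(\xi)(h-2\xi)$ to obtain a bound $\lesssim(\Lambda-\lambda)\omega'(\xi)\log M_0$. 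The tail integral $\int_{M_0\xi}^\infty\omega(h)/h^2\,dh$ is estimated via integration by parts by $\omega(M_0\xi)/(M_0\xi)+O(\omega'(\xi))$; combined with the $A\omega(\xi)$ prefactor and the uniform bound on $\omega(\xi)$, this gives a total contribution that is a small fraction of the spare diffusion $\lambda\omega(\xi)/\xi$ once $M_0$ is chosen large depending on $A,\lambda,||f_0'||_{L^\infty}$.

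With $M_0$ fixed, every remaining error term carries a factor of $\gamma$ (either through $\omega'(\xi)$ or through $\omega-\omega(\delta)$), while the spare $\lambda$ copy of the diffusion stays of order $\lambda\omega(\xi)/\xi$ uniformly. Taking $\gamma$ sufficiently small depending on $M_0,\lambda,\Lambda,||f_0'||_{L^\infty}$ then produces the required strict negative margin $-\omega'(\xi)\omega(\xi)$. I expect the main obstacle to be the large-$\xi$ regime: the tail bound scales linearly in $\omega(\xi)$, so without the a priori bound $\omega(\xi)\leq 2||f_0'||_{L^\infty}$ coming from the maximum principle, no constant $M_0$ would suffice; with that bound in hand, the order of parameter selection is $\delta$ first, then $M_0$, then $\gamma$.
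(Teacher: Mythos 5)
Your proposal is correct and follows essentially the same approach as the paper: take $M=1$ in the small-$\xi$ regime and a fixed large constant $M$ in the large-$\xi$ regime, split $2\lambda$ so that one copy feeds the Kiselev calculation \eqref{e:Kiselevcalc} and the spare copy of the diffusion absorbs the new error terms, use $\omega(\xi)\leq 2\|f_0'\|_{L^\infty}$ to tame the tail integral, and select parameters in the order $\delta$, then $M$, then $\gamma$. The only minor deviation is cosmetic: you bound the kernel-discrepancy integral via the concavity estimate $\omega(h-\xi)-\omega(\xi)\leq\omega'(\xi)(h-2\xi)$, whereas the paper uses $\omega(h-\xi)\leq\omega(h)$ followed by an explicit computation of $\int_\xi^\infty(\omega(h)-\omega(\xi))h^{-2}\,dh$; both yield an error of size $O(\gamma/\xi)$ (not literally $O(\omega'(\xi))$ as you wrote for the tail, but the resulting parameter constraints are the same).
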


\begin{proof}

By the Lemma \ref{l:omegabound} and \eqref{e:Kiselevcalc} which was proven in \cite{Kiselev}, it suffices to show
\begin{equation}
\begin{split}
A\omega'(\xi)\ln(M+1)\omega(\xi) &+A\omega(\xi)\int\limits_{M\xi}^\infty \frac{\omega(h)}{h^2}dh +2(\Lambda-\lambda)\int\limits_{\xi}^{M\xi} \frac{(\omega(h-\xi) - \omega(\xi))_+}{h^2}dh
\\&+\lambda\int\limits_0^\xi \frac{\delta_h\omega(\xi) + \delta_{-h}\omega(\xi)}{h^2}dh + \lambda\int\limits_{\xi}^\infty \frac{\omega(h+\xi)-\omega(h)-\omega(\xi)}{h^2}dh\leq -\omega'(\xi)\omega(\xi)
\end{split}
\end{equation}
for the correct choices of $M$, and $\delta, \gamma$ sufficiently small.

We proceed very similarly to \cite{Kiselev}.
To begin, for $\xi\leq \delta$ we take $M=1$.  Then we just need to show that
\begin{equation}
A\omega'(\xi)\omega(\xi) + A\omega(\xi)\int\limits_{\xi}^\infty \frac{\omega(h)}{h^2}dh + \lambda\int\limits_0^\xi \frac{\delta_h\omega(\xi) + \delta_{-h}\omega(\xi)}{h^2}dh + \lambda\int\limits_{\xi}^\infty \frac{\omega(h+\xi)-\omega(h)-\omega(\xi)}{h^2}dh\leq -\omega'(\xi)\omega(\xi) .
\end{equation}

In this regime, note that we have the bounds
\begin{equation}
\left\{\begin{array}{l}
\int\limits_{\xi}^\delta\frac{\omega(h)}{h^2}dh \leq \log(\delta/\xi),
\\ \int\limits_{\delta}^\infty \frac{\omega(h)}{h^2}dh = \frac{\omega(\delta)}{\delta} + \gamma\int\limits_{\delta}^\infty \frac{1}{h^2(4+\log(h/\delta))} dh \leq 1 + \frac{\gamma}{4\delta} \leq 2 \mbox{ if you take }\gamma<4\delta,
\\ \omega'(\xi)\leq 1,
\\ \omega(\xi)\leq \xi,
\\ \int\limits_0^\xi \frac{\omega(\xi+h)+\omega(\xi-h)-2\omega(\xi)}{h^2} \leq \xi\omega''(\xi) = -\frac{3}{2}\xi\xi^{-1/2}.
\end{array}\right.
\end{equation}
Putting this all together, we get that
\begin{equation}
\begin{split}
(A&+1)\omega'(\xi)\omega(\xi) + A\omega(\xi)\int\limits_{\xi}^\infty \frac{\omega(h)}{h^2}dh+ \lambda \int\limits_0^\xi \frac{\omega(\xi+h) + \omega(\xi-h)-2\omega(\xi)}{h^2}dh
\\&+ \lambda \int\limits_{\xi}^\infty \frac{\omega(\xi+h)-\omega(h)-\omega(\xi)}{h^2}dh
\leq \xi\left( (A+1)(3+\log(\delta/\xi)) - \frac{3}{2}\lambda \xi^{-1/2}\right)<0,
\end{split}
\end{equation}
assuming that $\delta$ is sufficiently small.

Now assume that $\xi\geq \delta$.  Then what we need to show is
\begin{equation}
\begin{split}
A\omega'(\xi)&\ln(M+1)\omega(\xi) + A\omega(\xi)\int\limits_{M\xi}^\infty \frac{\omega(h)}{h^2}dh +2(\Lambda-\lambda)\int\limits_{\xi}^{M\xi} \frac{(\omega(h-\xi) - \omega(\xi))_+}{h^2}dh
\\&+ \lambda\int\limits_0^\xi \frac{\delta_h\omega(\xi) + \delta_{-h}\omega(\xi)}{h^2}dh + \lambda\int\limits_{\xi}^\infty \frac{\omega(h+\xi)-\omega(h)-\omega(\xi)}{h^2}dh\leq -\omega'(\xi)\omega(\xi) .
\end{split}
\end{equation}

We first bound our new error terms.  Using the definition of $\omega$ and integrating by parts, we see that
\begin{equation}
\begin{split}
2(\Lambda-\lambda)\int\limits_{2\xi}^{M\xi} \frac{\omega(h-\xi) - \omega(\xi)}{h^2}dh &\leq 2(\Lambda-\lambda)\int\limits_\xi^\infty \frac{\omega(h)-\omega(\xi)}{h^2} dh
\leq 2(\Lambda-\lambda) \int\limits_\xi^\infty \frac{\gamma}{h^2(4+\log(h/\delta))}dh 
\\&\leq \frac{2(\Lambda-\lambda)\gamma}{\xi}\leq \frac{\lambda}{4}\frac{\omega(\delta)}{\xi}\leq \frac{\lambda}{4}\frac{\omega(\xi)}{\xi},
\end{split}
\end{equation}
assuming $\gamma\leq \frac{\lambda}{8(\Lambda-\lambda)}\omega(\delta)$.

In order to bound our other new error term, we will be taking $M$ sufficiently large and then $\gamma$ sufficiently small depending on $M,\delta$.  Noting that $\omega(\xi)\leq 2||f_0'||_{L^\infty}$, we can bound our other new error term by integrating by parts
\begin{equation}
\begin{split}
A\omega(\xi)\int\limits_{M\xi}^\infty \frac{\omega(h)}{h^2}dh &\leq \frac{2A||f_0'||_{L^\infty}}{M}\frac{\omega(M\xi)}{\xi} +2A||f_0'||_{L^\infty}\int\limits_{M\xi}^\infty \frac{\gamma}{h^2(4+\log(h/\delta))}dh
\\&\leq \frac{2A||f_0'||_{L^\infty}}{M}\frac{\omega(M\xi)}{\xi} +\frac{2A||f_0'||_{L^\infty}}{M}\frac{\gamma}{\xi}
\\&\leq \frac{\lambda}{16}\frac{\omega(M\xi)}{\xi} + \frac{\lambda}{8}\frac{\omega(\xi)}{\xi},
\end{split}
\end{equation}
assuming that $$M \geq \displaystyle\frac{32A||f_0'||_{L^\infty}}{\lambda},$$ and then $\gamma$ is sufficiently small so that $$\displaystyle\frac{2||f_0'||_{L^\infty}A}{M}\gamma\leq \frac{\lambda}{8}\omega(\delta)\leq \frac{\lambda}{8}\omega(\xi).$$  Note that this is where we set a value for $M$, and that $\gamma$ is taken sufficiently small depending on $M$.  Now that the value for $M$ is fixed, we can also control the value $\omega(M\xi)$ by taking $\gamma$ sufficiently small that
\begin{equation}
\begin{split}
\omega(M\xi) &= \omega(\xi) + \int\limits_{\xi}^{M\xi}\frac{\gamma}{h(4+\log(h/\delta))}dh \leq \omega(\xi) + \gamma\ln(M)\leq \omega(\xi)+\omega(\delta)
\\&\leq 2\omega(\xi).
\end{split}
\end{equation}
Hence,
\begin{equation}
A\omega(\xi)\int\limits_{M\xi}^\infty \frac{\omega(h)}{h^2}dh \leq\frac{\lambda}{16}\frac{\omega(M\xi)}{\xi} + \frac{\lambda}{8}\frac{\omega(\xi)}{\xi} \leq \frac{\lambda}{4}\frac{\omega(\xi)}{\xi}.
\end{equation}

Using the same integration by parts tricks, we can also show
\begin{equation}
\lambda\int\limits_{\xi}^\infty \frac{\omega(h+\xi)-\omega(h)-\omega(\xi)}{h^2}dh \leq -\frac{3}{4}\lambda\frac{\omega(\xi)}{\xi}.
\end{equation}
for $\gamma$ sufficiently small.

So combining these together, we get that
\begin{equation}
A\omega(\xi)\int\limits_{M\xi}^\infty \frac{\omega(h)}{h^2}dh +2(\Lambda-\lambda)\int\limits_{2\xi}^{M\xi} \frac{\omega(h-\xi) - \omega(\xi)}{h^2}dh + \lambda\int\limits_{\xi}^\infty \frac{\omega(h+\xi)-\omega(h)-\omega(\xi)}{h^2}dh \leq \frac{-\lambda}{4} \frac{\omega(\xi)}{\xi}.
\end{equation}

Since $\omega'(\xi)\omega(\xi)\leq \displaystyle\frac{\gamma\omega(\xi)}{\xi}$, we finally get that
\begin{equation}
\begin{split}
(A\ln(M+1)+1)\omega'(\xi)\omega(\xi) - \frac{\lambda}{4}\frac{\omega(\xi)}{\xi}
\leq \frac{\omega(\xi)}{\xi}\left((A\ln(M+1)+1)\gamma -\lambda/4\right)<0,
\end{split}
\end{equation}
if $\gamma$ is taken sufficiently small.
\end{proof}

\section{Our choice for the modulus $\rho$}
We've now shown that for the modulus defined in \eqref{e:omegadefn} that if the assumptions \eqref{e:omegaassumptions} hold that
\begin{equation}\label{e:omegafinalequation}
\frac{d}{dt}\left(f_x(t,\xi/2)-f_x(t,-\xi/2)\right)\bigg|_{t=T} < -\omega'(\xi)\omega(\xi).
\end{equation}
We claim that in fact \eqref{e:omegafinalequation} will hold for any rescaling $\omega_r(h) = \omega(rh)$ as well.  To see this, fix some $r>0$, and suppose that $f(t,x)$ satisfies the conditions of Lemma \ref{l:omegabound} for $\omega_r$ at time $T$ and distance $\xi$. Take $\tilde{f} (t,x) = rf(t/r, x/r)$, which is also a solution of \eqref{e:fequation}.  Then $\tilde{f}_x$ is a solution of \eqref{e:gequationK} with $\beta(\tilde{f}_0') = \beta(f_0')$, $||\tilde{f}_0'||_{L^\infty} = ||f_0'||_{L^\infty}$, and satisfying the conditions of Lemma \ref{l:omegabound} for $\omega$ at time $rT$ and distance $r\xi$.  Hence by Lemma \ref{l:omegainequality} 
\begin{equation}
\frac{d}{dt}\left(f_x(t,\xi/2)-f_x(t,-\xi/2)\right)\bigg|_{t=T} = r\frac{d}{dt}\left(\tilde{f}_x(t,r\xi/2)-\tilde{f}_x(t,-r\xi/2)\right)\bigg|_{t=rT} < -r\omega'(r\xi)\omega(r\xi) = -\omega_r'(\xi)\omega_r(\xi).
\end{equation}
So, \eqref{e:omegafinalequation} will hold for any rescaling $\omega_r$.  Also note that for $f_x(T,\xi/2)-f_x(T,-\xi/2)= \omega(\xi)$ to hold, we must necessarily have $\omega(\xi)\leq 2||f_x(T,\cdot)||_{L^\infty}<2||f_0'||_{L^\infty}$.  Thus taking
\begin{equation}
C = \sup\limits_{0<h<\omega^{-1}(2||f_0'||_{L^\infty})} \frac{h}{\omega(h)} = \frac{\omega^{-1}(2||f_0'||_{L^\infty})}{2||f_0'||_{L^\infty}},
\end{equation}
we see that
\begin{equation}
\omega(h)\geq \frac{h}{C}.
\end{equation}
for all relevant $h$.
Define
\begin{equation}\label{e:rhodefn}
\rho(h):= \omega(Ch),
\end{equation}
so that
\begin{equation}
\rho(h)\geq h,
\end{equation}
for all $h\in [0,\rho^{-1}(2||f_0'||_{L^\infty})]$.

Now, suppose that at time $T$, $f$ satisfies the assumptions \eqref{e:omegaassumptions} for $\rho(\cdot/T)$.  Then since $\rho(\cdot/T)$ is a rescaling of $\omega$, we have that
\begin{equation}\label{e:rhofinalequation}
\frac{d}{dt}\left(f_x(T,\xi/2)-f_x(T,-\xi/2)\right) < -\frac{d}{dh}\rho(h/T)\bigg|_{h=\xi}\rho(\xi/T) = \frac{-1}{T}\rho'(\xi/T)\rho(\xi/T)\leq \frac{-\xi}{T^2}\rho'(\xi/T) = \frac{d}{dt}\rho(\xi/t)\bigg|_{t=T}.
\end{equation}
Thus we've constructed a modulus $\rho$ which satisfies \eqref{e:finalinequality}, completing the proof of the generation of a Lipschitz modulus of continuity \eqref{e:mainresult} in our main theorem.

\section{Regularity in Time}

With the construction of the modulus $\rho$, we get universal Lipschitz bounds in space for $f_x(t,\cdot)$.  By the structure of \eqref{e:fequation}, we also get regularity in space for $f_t$.
\begin{proposition}\label{p:regularityspace}
Let $f:(0,T)\times \R \to \R$ be a classical solution to \eqref{e:fequation} with $||f(t,\cdot)||_{W^{1,\infty}}$ bounded and $||f_{xx}(t,\cdot)||_{L^\infty}\lesssim 1/t.$.  Then $f_t(t,\cdot)$ is Log-Lipschitz in space with
\begin{equation}
|f_t(t,\cdot) | \lesssim \max\{-\log(t), 1\}, \qquad |f_t(t,x)-f_t(t,y)| \lesssim -\log(|x-y|)|x-y|\left(1+\frac{1}{t}\right)\quad 0<|x-y|<1/2.
\end{equation}
\end{proposition}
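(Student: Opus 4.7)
The plan is to start from the integral representation
$$f_t(t,x) = \int_\R \frac{\delta_h f(t,x) - h f_x(t,x)}{(\delta_h f(t,x))^2 + h^2}\, dh,$$
which is the form derived in \eqref{e:gequationh}, and to denote the integrand by $G(t,x,h)$. Two pointwise bounds will drive everything. First, since $f$ is Lipschitz in space, $|\delta_h f|\leq \|f_x\|_\infty |h|$, so $(\delta_h f)^2 + h^2$ is comparable to $h^2$. Second, the quantitative Taylor estimate
$$|\delta_h f(x) - h f_x(x)| \leq \int_0^{|h|}|f_x(x\pm s) - f_x(x)|\,ds \leq \int_0^{|h|}\min\!\bigl(2\|f_x\|_\infty,\ s/t\bigr)\,ds$$
gives $|\delta_h f - h f_x(x)|\lesssim h^2/t$ for $|h|\leq t$ and $\lesssim |h|$ otherwise. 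Combined, these yield the pointwise bound $|G(t,x,h)|\lesssim \min(1/t,\ 1/|h|)$.

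For the $L^\infty$ bound on $f_t$, I would decompose $\R$ into $\{|h|<t\}$, $\{t\leq|h|\leq 1\}$, and $\{|h|>1\}$. The first region contributes $O(1)$ via $|G|\lesssim 1/t$, and the middle region contributes $O(\log(1/t))$ via $|G|\lesssim 1/|h|$, producing the $\max(1,-\log t)$ factor. The naive $1/|h|$ decay is not integrable at infinity, so on the tail one symmetrizes $h\mapsto -h$: setting $D_\pm = (\delta_{\pm h}f)^2 + h^2$ and expanding
$$G(t,x,h) + G(t,x,-h) = \frac{(\delta_hf + \delta_{-h}f)}{D_+} + \delta_{-h}f\Bigl(\tfrac{1}{D_-}-\tfrac{1}{D_+}\Bigr) - hf_x(x)\Bigl(\tfrac{1}{D_-}-\tfrac{1}{D_+}\Bigr),$$
then using $|\delta_h f + \delta_{-h} f|\leq h^2\|f_{xx}\|_\infty \lesssim h^2/t$ for the first piece and $|\delta_h f|\leq 2\|f\|_\infty$ (so that $|D_+ - D_-|\lesssim |h|$) for the remaining pieces, one verifies that the symmetrized sum is $\lesssim 1/h^2$ for $|h|>1$, giving an integrable tail.

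For the log-Lipschitz bound, fix $x\neq y$, set $d=|x-y|<1/2$, and write $f_t(x)-f_t(y)=\int[G(x,h)-G(y,h)]\,dh$. I would split at $|h|=d$ and at $|h|=1$. On $|h|<d$ the pointwise bound $|G|\lesssim 1/t$ contributes $O(d/t)$. On $d\leq |h|\leq 1$ apply the mean value theorem, $|G(x,h)-G(y,h)|\leq d\sup_z|\partial_x G(z,h)|$, and bound $|\partial_x G|\lesssim 1/(|h|t)+1/h^2$ by directly differentiating the quotient (using $\|f_{xx}\|_\infty\lesssim 1/t$ to control both $\partial_x(\delta_h f - h f_x)$ and $\partial_x(\delta_h f)^2$). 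Integration yields $(d/t)\log(1/d)+d \lesssim -d\log(d)(1+1/t)$. Finally, for $|h|>1$ combine the symmetrization from the $L^\infty$ step with the same mean value estimate to get an $O(d)$ contribution. Collecting the three pieces gives the stated estimate. The main obstacle is the large-$|h|$ analysis: the pointwise integrand decays only like $1/|h|$, and recovering integrability genuinely requires both the smoothness $\|f_{xx}\|_\infty\lesssim 1/t$ (for the $\delta_h f + \delta_{-h}f$ cancellation) and the boundedness of $f$ itself (to control $|D_+-D_-|$).
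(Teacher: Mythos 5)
Your proposal is correct and follows essentially the same approach as the paper: the same three-region decomposition at $|h|=t$ (respectively $|h|=|x-y|$) and $|h|=1$, the same Taylor and Lipschitz estimates driven by $\|f_{xx}\|_\infty\lesssim 1/t$, and the same $h\mapsto -h$ symmetrization to get an integrable tail. One small slip worth fixing: for $|h|>1$, bounding the first symmetrized piece by $|\delta_h f+\delta_{-h}f|\lesssim h^2/t$ only gives $O(1/t)$, not $O(1/h^2)$ — there you must use $|\delta_h f+\delta_{-h}f|\lesssim \|f\|_{L^\infty}$ instead (you do flag the boundedness of $f$ as essential, so this is just a matter of applying it to that piece too).
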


\begin{proof}
For $t<1$, we have that
\begin{equation}
\begin{split}
|f_t(t,x)| &= \bigg|\int\limits_\R \frac{\delta_hf(t,x) - hf_x(t,x)}{\delta_hf(t,x)^2+h^2} dh\bigg|  \leq \bigg|\int\limits_0^\infty \frac{\delta_hf(t,x) +\delta_{-h}f(t,x)}{\delta_{-h}f(t,x)^2 +h^2}dh\bigg|
\\&\qquad+ \bigg|\int\limits_0^\infty \frac{(\delta_hf(t,x)-hf_x(t,x))(\delta_hf(t,x)^2-\delta_{-h}f(t,x)^2)}{(\delta_hf(t,x)^2+h^2)(\delta_{-h}f(t,x)^2 +h^2)} dh\bigg|
\\& \lesssim \int\limits_0^t \frac{1}{t}dh  + \int\limits_t^1\frac{1}{h}dh   +  \int\limits_1^\infty\frac{1}{h^2}+\frac{1}{h^3}dh \lesssim -\log(t) + 1.
\end{split}
\end{equation}
For $t>1$, you can similarly show $|f_t(t,x)|\lesssim 1$, proving the first bound.

For regularity in space, we see that
\begin{equation}
\begin{split}
f_t(t,x)&-f_t(t,y)  = \int\limits_\R \frac{\delta_hf(t,x) - h f_x(t,x)}{\delta_hf(t,x)^2 + h^2} - \frac{\delta_hf(t,y) - h f_x(t,y)}{\delta_hf(t,y)^2 + h^2} dh
\\&= \int\limits_\R \frac{\delta_h f(t,x) - hf_x(t,x) - (\delta_hf(t,y)-hf_x(t,y))}{\delta_h f(t,y)^2+h^2}+ \frac{(\delta_hf(t,x) - h f_x(t,x))(\delta_hf(t,x)^2-\delta_hf(t,y)^2)}{( \delta_hf(t,x)^2 + h^2)(\delta_hf(t,y)^2 + h^2)} dh
\\&\leq \bigg|\int\limits_{|h|<|x-y|}\bigg| + \bigg|\int\limits_{|x-y|<|h|<1} \bigg|+\bigg|\int\limits_{|h|>1}\bigg|
\end{split}
\end{equation}

For $|h|<|x-y|$, we can bound similarly to before to get that
\begin{equation}
\bigg|\int\limits_{|h|<|x-y|}\bigg|\lesssim \int\limits_0^{|x-y|} \frac{1}{t}dh = \frac{|x-y|}{t}.
\end{equation}
For midsize $|x-y| < |h| < 1$, we have that
\begin{equation}
\begin{split}
\bigg|\delta_h f(t,x) - hf_x(t,x) - (\delta_hf(t,y)-hf_x(t,y))\bigg| = \bigg|\int\limits_0^h \delta_sf_x(t,x)- \delta_sf_x(t,y)ds \bigg|\lesssim \frac{|x-y|h}{t},
\\ \bigg| \delta_hf(t,x) - \delta_hf(t,y)\bigg| = \bigg| \int\limits_0^h f_x(t,x+s) - f_x(t,y+s)ds\bigg|\lesssim \frac{|x-y|h}{t}.
\end{split}
\end{equation}
Thus
\begin{equation}
 \bigg|\int\limits_{|x-y|<|h|<1} \bigg|\lesssim \frac{|x-y|}{t}\int\limits_{|x-y|}^1 \frac{1}{h}dh= \frac{-\ln(|x-y|)|x-y|}{t}.
\end{equation}
Finally, we use $L^\infty$ bounds on $f$ to get that
\begin{equation}
\begin{split}
\bigg|\int\limits_{|h|>1}\bigg| &\leq \bigg|\int\limits_{|h|>1}  \frac{\delta_h f(t,x) - \delta_hf(t,y)}{\delta_h f(t,y)^2+h^2} + \frac{(\delta_hf(t,x) - h f_x(t,x))(\delta_hf(t,x)^2-\delta_hf(t,y)^2)}{( \delta_hf(t,x)^2 + h^2)(\delta_hf(t,y)^2 + h^2)} dh\bigg|
\\&\qquad +|f_x(t,x)-f_x(t,y)| \ \bigg|\int\limits_{|h|>1} \frac{-h}{\delta_hf(t,y)^2 +h^2} dh \bigg|
\\& \lesssim |x-y|\int\limits_1^\infty \frac{1}{h^2} + \frac{1}{h^3} dh + \frac{|x-y|}{t}\int\limits_1^\infty \frac{1}{h^3}dh \lesssim \left(1+\frac{1}{t}\right)|x-y|.
\end{split}
\end{equation}
Putting this all together, we thus have that
\begin{equation}
|f_t(t,x)-f_t(t,y)|\lesssim -\ln(|x-y|)|x-y|\left(1+\frac{1}{t}\right).
\end{equation}
\end{proof}

Recall that in section 2, we assumed that our initial data $f_0\in C^\infty_c(\R)$ so that by the local existence results of \cite{MaxPrinciple}, there is a unique solution $f\in C^1((0,T_+); H^k)$ for $k$ arbitrarily large and some $T_+>0$.  We were then able to prove the existence of the modulus $\rho$ as in Theorem \ref{t:main} depending only on $\beta(f_0'),||f_0'||_{L^\infty}$, and hence with the solution $f$ existing for all time by the main theorem of \cite{ConstantinMain}.  For an arbitrary $f_0 \in W^{1,\infty}(\R)$ with $\beta(f_0')<1$, the same result holds true by compactness.  Let $\eta\in C^\infty_c(\R)$ be a smooth mollifier, and $\phi\in C^\infty_c(\R)$ be a smooth cutoff function.  For $f_0\in W^{1,\infty}(\R)$ with $\beta(f_0')<1$, take $f_0^{(\epsilon)}(x):= (f_0*\eta_\epsilon)(x)\phi(\epsilon x)$.  Then $f_0^{(\epsilon)}\to f_0$ in $W^{1,\infty}_{loc}$, with $\beta(f_0^{(\epsilon)\prime}), ||f_0^{(\epsilon)}||_{W^{1,\infty}(\R)} \to \beta(f_0'),||f_0||_{W^{1,\infty}(\R)}$ respectively as $\epsilon \to 0$.  Thus for $\epsilon $ sufficiently small, $\beta(f_0^{(\epsilon)\prime } )<1$ and the results of the previous section hold for the solution to the mollified problem $f^{(\epsilon)}$.  The $L^\infty$ bound on $f_t^{(\epsilon)}$ proven above along with the maximum principle for $f_x^{(\epsilon)}$ is enough to ensure that there a subsequence $f^{(\epsilon_k)}$ converging in $C_{loc}([0,\infty)\times \R)$ to a Lipschitz (weak) solution $f$ to the original problem.  In order to get a classical $C^1$ solution, we need regularity estimates for $f_x^{(\epsilon)},f_t^{(\epsilon)}$ in both time and space.  The modulus $\rho$ and Proposition \ref{p:regularityspace} give the regularity in space that we need for $f_x,f_t$.  All that leaves is to prove regularity in time.

%Recall that for initial data $f_0 \in W^{1,\infty}(\R)$ with $\beta(f_0')<1$, we approximated $f_0$ in $W^{1,\infty}_{loc}$ with $f_0^\epsilon \in C^\infty_c(\R)$.  The $L^\infty$ bound on $f_t$ proven above along with the maximum principle for $f_x$ is enough to ensure that there a subsequence $f^{\epsilon_k}$ converging in $C([0,\infty)\times \R)$ to a Lipschitz (weak) solution $f$.  In order to get a classical $C^1$ solution, we need regularity estimates for $f_x,f_t$ in both time and space.  The modulus $\rho$ and Proposition \ref{p:regularityspace} give the regularity in space that we need for $f_x,f_t$.  All that leaves is to prove regularity in time.

\begin{proposition}
Let $f$ be a sufficiently smooth solution to \eqref{e:fequation} with $\beta(f_0')<1$.  Then $f_x,f_t \in C^{\alpha}_{loc}((0,\infty)\times \R)$ with
\begin{equation}
||f_x||_{C^{\alpha}(Q_{t/4}(t,x))}, ||f_t||_{C^{\alpha}(Q_{t/4}(t,x))}\leq C(\beta(f_0'),||f||_{L_t^\infty((t/2,3t/2); W_x^{2,\infty}(\R))}) \max\{t^{-\alpha},1\},
\end{equation}
where $Q_r(s,y) = (s-r,s]\times B_r(y)$, and $\alpha>0$ depends only on $\beta(f_0'),||f_0'||_{L^\infty}$.
\end{proposition}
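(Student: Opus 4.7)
The plan is to first establish interior $C^\alpha$ regularity for $f_x$ via the drift-diffusion equation \eqref{e:gequationK}, and then to deduce $C^\alpha$ regularity for $f_t$ directly from the Muskat equation \eqref{e:fequation}. Writing $u := f_x$, equation \eqref{e:gequationK} takes the form
\begin{equation*}
u_t = b(x) u_x + L u, \qquad b(x) = \int_\R \frac{-h}{\delta_h f(x)^2 + h^2}\, dh, \qquad Lu(x) = \int_\R \delta_h u(x)\, K(x,h)\, dh.
\end{equation*}
By \eqref{e:Klowerbound}, $L$ is uniformly elliptic of order one with constants $\lambda, \Lambda$ depending only on $\beta(f_0')$ and $||f_0'||_{L^\infty}$, and a standard principal-value computation bounds $b$ in $L^\infty$ in terms of $||f_x||_{L^\infty}$. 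Thus $u$ satisfies a nonlocal drift-diffusion equation at the critical scaling (drift and diffusion of the same order one).

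To obtain $C^\alpha$ regularity for $u$ on the parabolic cylinder $Q_{t_*/4}(t_*, x_0)$, I would exploit the natural parabolic scaling of Muskat. For fixed $(t_*, x_0)$ with $t_* > 0$, set $\tilde f(s, y) = (4/t_*) f(t_*/2 + t_* s/4,\, x_0 + t_* y/4)$, which again solves \eqref{e:fequation} on $(0,4) \times \R$, with $\tilde f$ uniformly bounded in $W^{2,\infty}$ by the assumed control of $||f||_{L^\infty_t((t_*/2, 3t_*/2); W^{2,\infty}_x)}$. The rescaled slope $\tilde u = \tilde f_x$ satisfies the same drift-diffusion equation with the same ellipticity constants $\lambda, \Lambda$, and with drift $\tilde b$ bounded by a constant depending only on $\beta(f_0')$ and $||f_0'||_{L^\infty}$. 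The image of $Q_{t_*/4}(t_*, x_0)$ under this rescaling is $Q_1(2, 0)$. Applying an interior $C^\alpha$ estimate for critical-order nonlocal drift-diffusion equations (in the spirit of Silvestre's H\"older estimates for advection fractional-diffusion equations, or the Schwab--Silvestre theory for rough, non-translation-invariant kernels of order one) yields a universal $\alpha > 0$ and a universal $C^\alpha$ bound on $\tilde u$ over $Q_1(2,0)$. Undoing the rescaling gives $||f_x||_{C^\alpha(Q_{t_*/4}(t_*, x_0))} \lesssim \max\{t_*^{-\alpha}, 1\}$ with constant of the form claimed.

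For $C^\alpha$ regularity of $f_t$, I would use \eqref{e:fequation} to write $f_t$ as a nonlocal integral in $f, f_x$ and bound differences $f_t(t,x) - f_t(s,y)$ by splitting the integral into small, intermediate, and large $|h|$ ranges exactly as in the proof of Proposition \ref{p:regularityspace}. The small-$|h|$ range uses the Taylor bound $|\delta_h f - h f_x| \lesssim ||f_{xx}||_{L^\infty} h^2$; the intermediate range uses the space-time $C^\alpha$ bound on $f_x$ obtained in the previous step; the large-$|h|$ range uses the $L^\infty$ bounds on $f$ together with $C^\alpha$ of $f_x$ for the leading $f_x$ contribution. The same scaling $\max\{t_*^{-\alpha}, 1\}$ results.

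The main obstacle is identifying and applying an interior $C^\alpha$ estimate in the key step: we are at the critical scaling, the drift is merely bounded, and the kernel $K(x,h)$ is neither symmetric nor translation invariant. The relevant framework exists in the work of Silvestre and Schwab--Silvestre on H\"older estimates for critical-order nonlocal equations with rough coefficients, and the uniform ellipticity hypothesis is exactly \eqref{e:Klowerbound}. Once the applicable theorem has been matched to our setting, the remaining work is careful bookkeeping of the parabolic rescaling and of the kernel-splitting arguments already used in Proposition \ref{p:regularityspace}.
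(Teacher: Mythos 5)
Your overall strategy---local $C^\alpha$ for $f_x$ from the drift-diffusion structure of \eqref{e:gequationK}, followed by $C^\alpha$ for $f_t$ by splitting the Muskat integral---matches the paper's, and your explicit parabolic rescaling to $Q_1(2,0)$ is cleaner bookkeeping than the paper gives. But the key regularity step for $f_x$ goes by a genuinely different route, and as stated there is a gap.

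The paper does not leave the equation in drift-diffusion form. It splits $K$ into symmetric and antisymmetric parts, moving both the drift $f_{xx}(t,x)\int_\R\frac{-h}{\delta_hf^2+h^2}\,dh$ and the antisymmetric piece $\int_\R \delta_h f_x\cdot\frac{K(t,x,h)-K(t,x,-h)}{2}\,dh$ into a source $F(t,x)$, then shows $|F|\lesssim\|f(t,\cdot)\|_{W^{2,\infty}}$ using \eqref{e:kbound1}-type bounds (in particular $|K(t,x,h)-K(t,x,-h)|\lesssim\min\{|h|^{-1},|h|^{-2}\}$). This reduces to the simplest setting of Silvestre's estimate: a symmetric critical-order kernel with bounded source. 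You instead keep the full nonsymmetric $K$ and invoke Schwab--Silvestre theory citing only \eqref{e:Klowerbound} and a bounded drift. At order $\sigma=1$ that is not sufficient: the antisymmetric part of the kernel is exactly as strong as the advection, and the H\"older theory for nonsymmetric critical-order kernels requires in addition a uniform bound on the effective drift $\int_{r<|h|<1}h\,K(x,h)\,dh$, which does not follow from ellipticity alone. That extra control is precisely what the $W^{2,\infty}$ hypothesis buys and what the paper verifies explicitly; your proposal never verifies it, and the remark that ``the uniform ellipticity hypothesis is exactly \eqref{e:Klowerbound}'' glosses over exactly this point. To close the argument, either carry out the paper's symmetrize-and-absorb step, or verify the first-moment condition directly from the $W^{2,\infty}$ bound. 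A related smaller imprecision: the drift $b$ is not controlled by $\|f_x\|_{L^\infty}$ alone---the near-diagonal part of the principal value already needs $\|f_{xx}\|_{L^\infty}$ (or at least a Dini modulus for $f_x$) to be integrable---though this is harmless since the final constant carries $W^{2,\infty}$ dependence anyway.
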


\begin{proof}

We have that $f_x$ solves
\begin{equation}
(f_x)_t (t,x) =  f_{xx}(t,x) \int\limits_{\R}\frac{-h}{\delta_hf(t,x)^2+h^2} dh + \int\limits_\R \delta_hf_x(t,x) K(t,x,h) dh,
\end{equation}
where $\displaystyle\frac{\lambda}{h^2} \leq K(t,x,h) \leq \frac{\Lambda}{h^2} $ is uniformly elliptic with ellipticity constants $\lambda, \Lambda$ depending on $\beta(f_0'),||f_0'||_{L^\infty}$.
Rewriting this, we have that $f_x$ satisfies
\begin{equation}\label{e:gequationsymmetric}
\begin{split}
(f_x)_t - \int\limits_{\R} \delta_h f_x(t,x) \left(\frac{K(t,x,h)+K(t,x,-h)}{2}\right) dh &=  f_{xx}(t,x) \int\limits_{\R}\frac{-h}{\delta_hf(t,x)^2+h^2} dh
\\&\qquad+ \int\limits_{\R} \delta_h f_x(t,x) \left(\frac{K(t,x,h)-K(t,x,-h)}{2}\right) dh.
\end{split}
\end{equation}

Let $F(t,x)$ denote the righthand side of \eqref{e:gequationsymmetric}.  Then $F(t,x)$ is locally bounded with $|F(t,x)|$ controlled by $||f(t,\cdot)||_{W^{2,\infty}}$.  Then since $(K(t,x,h)+K(t,x,-h))/2$ is a symmetric uniformly elliptic kernel, it follows that we have local $C^{\alpha}$ bounds for $\alpha \leq\alpha_0$ for some $\alpha_0$ depending on ellipticity constants (see \cite{Silvestre}).

So, all we have to do is give bounds on $F(t,x)$ depending only on $||f(t,\cdot)||_{W^{2,\infty}}$.  Similar to proof of Lemma \ref{l:driftone},
\begin{equation}
\int\limits_\R \frac{-h}{\delta_hf(t,x)^2 +h^2} dh = \int\limits_0^\infty h \frac{\delta_hf(t,x)^2 - \delta_{-h}f(t,x)^2}{(\delta_hf(t,x)^2 +h^2)(\delta_{-h}f(t,x)^2 +h^2)} dh\lesssim \int\limits_0^1 1 dh + \int\limits_1^\infty \frac{1}{h^3} dh \lesssim 1.
\end{equation}
Also similar to the proof of Lemma \ref{l:drifttwo} (specifically \eqref{e:kbound1}), we have that
\begin{equation}
|K(t,x,h) - K(t,x,-h)| \lesssim \min\{ \frac{1}{h}, \frac{1}{h^3}\},
\end{equation}
so
\begin{equation}
\bigg|\int\limits_\R \delta_h f_x(t,x) \left(\frac{K(t,x,h)-K(t,x,-h)}{2}\right) dh \bigg| \lesssim \int\limits_0^1 1 dh + \int\limits_1^\infty \frac{1}{h^3} dh \lesssim 1.
\end{equation}

Thus since we've bounded the right hand side of \eqref{e:gequationsymmetric} depending only on $||f(t,\cdot)||_{W^{2,\infty}}$, we have our local $C^\alpha$ bounds for $f_x$ for all $\alpha$ sufficiently small.  A $C^\alpha$ bound that is uniform in $x$ for $f_x$ then gives a log $C^\alpha$ estimate for $f_t$, similar to the proof for regularity in space in Proposition \ref{p:regularityspace}.  Thus we have $C^\alpha$ estimates for both $f_x,f_t$.

\end{proof}

\appendix

\section{Uniqueness}

We now prove that if our initial data $f_0\in C^{1,\epsilon}(\R)$ with $\beta(f_0') <1$, then the solution $f$ given by Theorem \ref{t:main} is unique with $f\in L^\infty([0,\infty) ; C^{1,\epsilon})$.  As mentioned before, this essentially follows from the uniqueness theorem given in \cite{ConstantinMain}, which under our assumptions simplifies to
\begin{theorem}(Constantin et al) \label{t:ConstantinUnique}
Let $f\in L^\infty ([0,T]; W^{1,\infty})$ be a classical, $C^1$ solution to \eqref{e:fequation} with initial data $f(0,x)=f_0(x)$.  Assume that $\lim\limits_{x\to \infty}f(t,x) = 0$, and that there is some modulus of continuity $\tilde{\rho}$ such that
\begin{equation}
f_x(t,x)-f_x(t,y)\leq \tilde{\rho}(|x-y|), \quad \forall 0\leq t\leq T, \ x\not=y\in \R.
\end{equation}
Then the solution $f$ is unique.
\end{theorem}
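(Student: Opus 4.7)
The plan is an energy--type argument on the difference of two solutions, closed via an Osgood inequality. Let $f^1, f^2$ be solutions satisfying the hypotheses with common initial data, and set $g := f^1 - f^2$. Using \eqref{e:fequation} for each $f^i$, I would write
\begin{equation*}
g_t(t,x) = \int_\R \left[\frac{\delta_h f^1(x) - h f^1_x(x)}{\delta_h f^1(x)^2 + h^2} - \frac{\delta_h f^2(x) - h f^2_x(x)}{\delta_h f^2(x)^2 + h^2}\right] dh,
\end{equation*}
and then split the integrand by adding and subtracting so that it decomposes as a linear, uniformly elliptic operator acting on $g$ (with coefficients built from $f^1, f^2$), plus lower--order remainder terms arising from kernel differences. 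The computations of Section 3 --- which require only $\beta(f_0')<1$ --- ensure that the elliptic piece is uniformly of order one with ellipticity constants $\lambda, \Lambda$ as in \eqref{e:lambda}.

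Next I would pair this identity with $g$ in $L^2(\R)$, or alternatively test against $\sgn(g)$ for an $L^\infty$ estimate. The elliptic contribution produces a nonpositive quadratic form (respectively, a maximum--principle bound), while the remainder terms must be controlled using the uniform modulus $\tilde\rho$. The crucial pointwise tool is the Taylor--type identity
\begin{equation*}
f^i(x+h) - f^i(x) - h f^i_x(x) = \int_0^h [f^i_x(x+s) - f^i_x(x)]\, ds,
\end{equation*}
which gives $|f^i(x+h) - f^i(x) - h f^i_x(x)| \leq |h|\tilde\rho(|h|)$ and neutralizes the singularity at $h=0$ when numerators are differenced. For large $|h|$, the decay of $f^i$ at infinity together with the $L^\infty$ bounds on $f^i_x$ provides integrability.

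Expanding each kernel difference pointwise produces numerators containing $\delta_h g$, $g_x$, and mixed terms over powers of $\delta_h f^i(x)^2 + h^2$. Splitting the $h$--integral into $|h|<r$ and $|h|>r$ for an appropriate scale $r = r(\|g\|)$ and using $\tilde\rho$ on the near--diagonal piece, I expect a differential inequality of the shape
\begin{equation*}
\frac{d}{dt}\|g(t,\cdot)\|_{L^2}^2 \leq C\, \Psi(\|g(t,\cdot)\|_{L^2}^2),
\end{equation*}
where $\Psi$ satisfies the Osgood condition $\int_{0^+} \Psi(s)^{-1}\, ds = \infty$ (in the $C^{1,\epsilon}$ regime $\Psi(s) \approx s\log(1/s)$ up to constants). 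Since $g(0,\cdot) \equiv 0$, Osgood's lemma then forces $g \equiv 0$ on $[0,T]$.

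The main obstacle is handling the cross terms that couple $\delta_h g$ with $\delta_h(f^1_x - f^2_x) = \delta_h g_x$: a derivative of the difference appears on the right--hand side and cannot be bounded pointwise by $g$ alone. The expectation is that these terms are absorbed into the coercive $H^{1/2}$--type bilinear pairing produced by the elliptic piece, using the strict lower bound $\lambda>0$ secured by $\beta(f_0')<1$. Verifying this absorption with a merely Hölder modulus $\tilde\rho$ (rather than the Lipschitz modulus handled in the bulk of the present paper) is exactly where the small adjustments to the \cite{ConstantinMain} argument enter.
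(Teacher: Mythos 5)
The statement you are proving is attributed in the paper to \cite{ConstantinMain}; the paper itself does not reprove it, but the appendix describes the mechanism: track $M(t) = \|f_1(t,\cdot)-f_2(t,\cdot)\|_{L^\infty}$, use the decay hypothesis to find for a.e.\ $t$ a point $x(t)$ where the supremum is attained and where $\frac{d}{dt}M(t) = \big(\frac{d}{dt}|f_1-f_2|\big)(t,x(t))$, and then bound that derivative using \eqref{e:fequation}, the uniform modulus $\tilde\rho$, and the $W^{1,\infty}$ bounds. This is a pointwise $L^\infty$ maximum--principle argument, not an energy estimate.

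Your primary line of attack (pairing with $g$ in $L^2$) has a genuine gap, and it is precisely the one you flag and then wave past. After differencing the two nonlocal integrands, the numerators contain $\delta_h g(x) - h\,g_x(x)$. The $g_x$ contribution is a first--order drift in $g$; $\|g_x\|_{L^2}$ is not controlled by $\|g\|_{L^2}$, nor by the $H^{1/2}$--type seminorm that the elliptic piece produces. Your proposed fix --- "these terms are absorbed into the coercive $H^{1/2}$--type bilinear pairing" --- does not work: $\dot H^{1/2}$ does not dominate $\dot H^1$, and the elliptic operator here has $x$--dependent coefficients, so pairing with $g$ does not even produce a clean nonpositive quadratic form without commutator errors of its own. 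Integrating by parts to move $\partial_x$ off of $g$ instead dumps it onto coefficients built from $f^1, f^2$, generating $f^i_{xx}$, which is not controlled by the hypotheses. The $L^2$ route therefore does not close as described.

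The $L^\infty$ route you mention in passing (testing against $\sgn(g)$) is the one that actually works, and it works for a structural reason you should make explicit: at a point $x_0$ where $g$ attains a positive maximum, $g_x(x_0) = 0$, so the problematic drift term simply vanishes, and $\delta_h g(x_0) \le 0$ for all $h$, so the elliptic term (with uniformly positive kernel, by $\beta(f_0')<1$) contributes with a favorable sign. That is where the decay assumption enters --- it guarantees the supremum is attained, so that a point $x(t)$ with $g_x(x(t))=0$ exists. The remaining error terms, coming from the difference of denominators, are controlled via the pointwise Taylor identity you wrote (giving $|\delta_h f^i - h f^i_x| \le |h|\,\tilde\rho(|h|)$) together with $|\delta_h g| \le \min\{2M(t),\, C|h|\}$, which is what produces the Osgood-type differential inequality for $M(t)$. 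You should also not assert $\Psi(s) \approx s\log(1/s)$ without deriving it; with a general modulus $\tilde\rho$ the resulting nonlinearity is $\tilde\rho$--dependent, and the Osgood condition must be verified, not assumed.

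In short: replace the $L^2$ energy argument with the $L^\infty$ maximum--principle argument as the main route, use the vanishing of $g_x$ at the max point to kill the drift rather than hoping for absorption, and derive the Osgood nonlinearity from $\tilde\rho$ rather than positing it. With those changes the proposal aligns with the \cite{ConstantinMain} proof the paper is referencing.
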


The authors of \cite{ConstantinMain} note that the uniform continuity assumption should be the only real assumption; the decay is assumed for convenience in their proof. 
So, we start by proving that if $f_0\in C^{1,\epsilon}(\R)$, then the solution $f\in L^\infty ([0,\infty); C^{1,\epsilon})$.  To begin, suppose that $f_0\in C^{1,1}(\R)$. Then necessarily $f_0'$ has modulus $\rho(\cdot/\delta)$ for some $\delta>0$ sufficiently small.  The same proof for the instantaneous generation of the modulus $\rho$ will give that $f_x(t,\cdot)$ has modulus $\rho(\cdot/t+\delta)$.  Hence $f_x(t,\cdot)$ has modulus $\rho(\cdot/\delta)$ for all $t\geq 0$.

If $f_0\in C^{1,\epsilon}(\R)$, we can make the same essential argument by changing the definition of $\rho$ , $\omega$.  You can repeat the arguments of section 7 and 8
for the modulus
\begin{equation}
\left\{\begin{array}{cl} \omega^{(\epsilon)}(\xi) = \xi^{\epsilon}, & 0\leq \xi \leq \delta \\ \omega^{(\epsilon) \ \prime}(\xi) = \displaystyle\frac{\gamma}{\xi(4+\log(\xi/\delta))}, & \xi\geq \delta \end{array}\right. .
\end{equation}
All the error terms for $\xi\leq \delta$ are of order $\xi^{2\epsilon-1}$, while the diffusion term is of the order $\xi^{\epsilon -1}$, so there are no problems as long as $\delta$ is sufficiently small.  The argument for $\xi\geq \delta$ is identical to the original. Taking $\rho^{(\epsilon)}$ to be some suitable rescaling of $\omega^{(\epsilon)}$, we then have that if $f_0'$ has modulus $\rho^{(\epsilon)}(\cdot/\delta)$, then $f_x(t,\cdot)$ will have modulus $\rho^{(\epsilon)}(\cdot/t+\delta)$.

Thus if $f_0\in C^{1,\epsilon}(\R)$, then the solution $f$ given by Theorem \ref{t:main} will satisfy the main uniform continuity assumption of Theorem \ref{t:ConstantinUnique}.  Our solution $f$ will not decay as $x\to \infty$, but that assumption isn't truly necessary.

Let $f_1,f_2$ be two uniformly continuous, classical solutions to \eqref{e:fequation} with the same initial data, and let $M(t) = ||f_1(t,\cdot)-f_2(t,\cdot)||_{L^\infty}$.  With the decay assumption, the authors of \cite{ConstantinMain} are able to assume that for almost every $t$, there is a point $x(t)\in \R$ such that
\begin{equation}
M(t) = |f_1(t,x(t)) - f_2(t,x(t))|, \quad \frac{d}{dt}M(t) = \left(\frac{d}{dt}|f_1 - f_2|\right)(t,x(t)).
\end{equation}
They then bound $\frac{d}{dt}|f_1(t,x(t)) - f_2(t,x(t))|$ using equation $\eqref{e:fequation}$, $\tilde{\rho}$, and $W^{1,\infty}$ bounds.

Without the decay assumption, you instead use that
\begin{equation}
\frac{d}{dt}M(t) \leq \sup\{\frac{d}{dt}|f_1(t,x) - f_2(t,x)| : |f_1(t,x)-f_2(t,x)| \geq M(t)-\delta\},
\end{equation}
where $\delta>0$ is arbitrary.  When you go to bound $\frac{d}{dt}|f_1(t,x) - f_2(t,x)|$, you then get new error terms which can be bounded by
\begin{equation}
C(\tilde{\rho},\max\limits_i ||f_i(t,\cdot)||_{W^{1,\infty}},  M(t)) \left(\delta + |f_{1,x}(t,x)-f_{2,x}(t,x)|\right).
\end{equation}
Since $f_{i,x}(t,x)$ is bounded and has modulus $\tilde{\rho}$, it then follows that
\begin{equation}
|f_{1,x}(t,x)-f_{2,x}(t,x)| = o_\delta(1).
\end{equation}
Thus by taking $\delta$ sufficiently small depending on $\tilde{\rho},\max\limits_i ||f_i(t,\cdot)||_{W^{1,\infty}},  M(t)$, we can guarantee that the new error terms $\lesssim M(t)$.  Then the original proof of \cite{ConstantinMain} goes through.

\section*{Acknowledgements}
I would like to thank my advisor Luis Silvestre for suggesting the problem, pointing me towards good resources, and just giving good advice in general.

\bibliographystyle{abbrv}
\bibliography{MuskatProblem}

\end{document}